\numberwithin{equation}{section}
\newtheorem{proposition}{Proposition}[section]
\newtheorem{lemma}{Lemma}[section]
\newtheorem{theorem}{Theorem}[section]
\newtheorem{corollary}{Corollary}[section]
\begin{document}
\title[Maximal estimates for Schr\"odinger with inverse-square potential]{Maximal estimates for Schr\"odinger equation with inverse-square potential}
\author{Changxing Miao}
\address{Institute of Applied Physics and Computational Mathematics, P. O. Box 8009, Beijing, China, 100088}
\email{miao\_changxing@iapcm.ac.cn}

\author{Junyong Zhang}
\address{Department of Mathematics, Beijing Institute of Technology, Beijing 100081,
China} \email{zhang\_junyong@bit.edu.cn}

\author{Jiqiang Zheng}
\address{The Graduate School of China Academy of Engineering Physics, P. O. Box 2101, Beijing, China, 100088}
\email{zhengjiqiang@gmail.com}
\maketitle
\begin{abstract}
In this paper, we consider the maximal estimates for the solution to an
initial value problem of the linear Schr\"odinger equation with a singular
potential. We show a result about the pointwise convergence of
solutions to this special variable coefficient Schr\"odinger
equation with initial data $u_0\in H^{s}(\R^n)$ for $s>1/2$ or
radial initial data $u_0\in H^{s}(\R^n)$ for $s\geq1/4$ and the
solution does not converge when $s<1/4$.
\end{abstract}

\begin{center}
 \begin{minipage}{120mm}
   { \small {\bf Key Words: Inverse square potential, Maximal estimate, Spherical harmonics }
      {}
   }\\
    { \small {\bf AMS Classification:}
      { 35B65, 35Q55, 47J35.}
      }
 \end{minipage}
 \end{center}
\section{Introduction and Statement of Main Result}
We study the maximal estimates for the solution to an initial value
problem of the linear Schr\"odinger equation with an inverse square
potential. More precisely, we consider the following Schr\"odinger
equation
\begin{equation}\label{1.1}
\begin{cases}
i\partial_{t}u-\Delta u+\frac{a}{|x|^2}u=0, \qquad
(t,x)\in\R\times\R^n, ~a>-(n-2)^2/4,\\
u(x,0)=u_0(x).
\end{cases}
\end{equation}

The scale-covariance elliptic operator
$P_a:=-\Delta+\frac{a}{|x|^2}$ appearing in \eqref{1.1} plays a key
role  in many problems of physics and geometry. The heat and wave
flows for the elliptic operator $P_a$ have been studied in the
theory of combustion (see \cite{LZ}), and in  the wave propagation
on conic manifolds (see \cite{CT}). The Schr\"odinger equation
\eqref{1.1} arises in the study of quantum mechanics \cite{KSWW}.
There has been a lot of interest in developing Strichartz estimates
both for the Schr\"odinger and wave equations with the inverse
square potential, we refer the reader to Burq
etc.\cite{BPSS,BPSS1,PSS,PSS1} and the authors \cite{MZZ}. However,
as far as we known, there is few result about the maximal
estimates associated with the operator $P_a$, which arises in the
study of pointwise convergence problem for the Schr\"odinger and
wave equations with the inverse square potential. In this paper, we
aim to address some maximal estimates in the special settings
associated with the operator $P_a$. As a direct consequence, we
obtain the pointwise convergence result for $u_0\in H^{s}(\R^n)$
with $s>1/2$. \vspace{0.2cm}

In the case of the free Schr\"odinger equation without potential,
i.e. $a=0$, there are a large amount of literature in developing the
maximal estimate for its solution, which can be formally written as
\begin{equation*}
\begin{split}
u(t,x)=e^{it\Delta}u_0(x)=\int_{\R^n}e^{2\pi
i(x\cdot\xi-t|\xi|^2)}\hat{u}_0(\xi)\mathrm{d}\xi.
\end{split}
\end{equation*}
When $n=1$, Carleson \cite{Car} proved the convergence result
holds in sense of that $\lim\limits_{t\rightarrow 0}u(t)=u_0, a.e.
~x$ when $u_0\in H^s(\R)$ with $s\geq 1/4$. Dahlberg-Kenig \cite{DK}
showed that the result is sharp in the sense that the solution does
not converge when $s<1/4$. When $n\geq2$, Sj\"olin \cite{Sj} and
Vega \cite{Vega} independently proved the convergence results hold
when $u_0\in H^s(\R^n)$ when $s>1/2$. It follows from the construction of
Dahlberg-Kenig \cite{DK}, or alternatively Vega \cite {Vega} that the
solution does not converge when $s<1/4$. When $n=2$, Bourgain
\cite{Bo} showed that there is a certain $s<1/2$ such that the
convergence result holds, and this result was improved by Moyua-Vargas-Vega \cite{MVV}. Having shown the bilinear restriction
estimates for paraboloids, Tao-Vargas \cite{TV} and Tao \cite{Tao}
showed the convergence result holds for $s>15/32$ and $s>2/5$
respectively. The result was improved further to $s>3/8$ by Lee
\cite{Lee} and Shao \cite{Shao}. Very recently, Bourgain \cite{Bour}
made some progress in high dimension $n\geq2$ to show that the
convergence result holds for $s>1/2-1/(4n)$ when $n\geq1$ and the
convergence result needs $s\geq (n-2)/(2n)$ when
$n\geq5$.\vspace{0.2cm}

In the situation when $a\neq0$, the equation \eqref{1.1} can be
viewed as a special Schr\"odinger equation with variable singular
coefficients. The potential prevents us from using the Fourier
transform to give the expression of the solution. With the
motivation of  regarding the potential term as a perturbation on
angular direction in \cite{BPSS,PSS,MZZ}, we express the
solution by using the Hankel transform
of radial functions and spherical harmonics. Instead of
 Fourier transform, we utilize the Hankel transform and modify the argument of Vega \cite{Vega} to show the pointwise convergence result
holds when the initial data $u_0\in H^{s}(\R^n)$ for $s>1/2$, or
radial initial data $u_0\in H^{s}(\R^n)$ for $s\geq1/4$, and the
solution does not converge when $s<1/4$.
\vspace{0.2cm}

Let $u$ be the solution to \eqref{1.1}, we define the maximal
function by
\begin{equation}\label{1.2}
\begin{split} u^*(x)=\sup_{|t|>0}|u(x,t)|.
\end{split}
\end{equation}

Our main theorems are the following:
\begin{theorem}\label{thm1} Let $\beta>1$, $n\geq2$ and $s>\frac 12$.
Then
\begin{equation}\label{1.3}
\begin{split}
\int_{\R^n}| u^*(x)|^2\frac{\mathrm{d}x}{(1+|x|)^\beta}\leq
C\|u_0\|^2_{H^{s}(\R^n)}.
\end{split}
\end{equation}
\end{theorem}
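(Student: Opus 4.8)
The plan is to run Vega's argument for the free equation inside the functional calculus of $P_a$, the only genuinely new analytic input being a local smoothing estimate for $e^{-itP_a}$, with a gain of half a derivative, that is \emph{uniform across the angular modes}. Two preliminary reductions: since $H^{s}(\R^n)\hookrightarrow H^{s'}(\R^n)$ for $s>s'$, one may assume $s$ close to $\tfrac12$; and by a standard argument based on Hardy-type inequalities (valid in the range of $s$ at issue) one may replace $\|u_0\|_{H^{s}}$ in \eqref{1.3} by the $P_a$-adapted Sobolev norm $\|(1+P_a)^{s/2}u_0\|_{L^2}$. Expand $u_0$ in spherical harmonics, $u_0=\sum_{k\ge0}\sum_\ell a_{k,\ell}(r)Y_{k,\ell}(\omega)$ with $x=r\omega$: on the $k$-th sector $P_a$ is conjugate, via $h=r^{(n-1)/2}a$, to the half-line Bessel operator $L_\nu=-\partial_r^2+(\nu^2-\tfrac14)r^{-2}$ of order $\nu=\nu(k)=\sqrt{(k+\tfrac{n-2}{2})^2+a}$ (real because $a>-(n-2)^2/4$), which the unitary self-reciprocal Hankel transform $\mathcal H_\nu g(\rho)=\int_0^\infty\sqrt{r\rho}\,J_\nu(r\rho)\,g(r)\,dr$ diagonalizes, so that
\[
u(t,r\omega)=\sum_{k,\ell}\frac{Y_{k,\ell}(\omega)}{r^{(n-1)/2}}\int_0^\infty e^{-it\rho^2}\,\sqrt{r\rho}\,J_{\nu(k)}(r\rho)\,\big(\mathcal H_{\nu(k)}(r^{(n-1)/2}a_{k,\ell})\big)(\rho)\,d\rho .
\]

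The core of Vega's argument, recast, is to trade the supremum in $t$ for an $L^2_t$ norm against a half-derivative of local smoothing. Decompose into Littlewood--Paley pieces adapted to $P_a$, $u=\sum_{j\in\mathbb Z}u^j$ with $u^j=e^{-itP_a}P_ju_0$ and $P_j$ localizing the spectral parameter $\rho=\sqrt{P_a}$ to $\rho\sim2^j$. For fixed $x$ the function $t\mapsto u^j(t,x)$ has time-frequency $\sim2^{2j}$, hence $\|\partial_tu^j(\cdot,x)\|_{L^2_t}\sim2^{2j}\|u^j(\cdot,x)\|_{L^2_t}$ for $j\ge0$ (and $\lesssim\|u^j(\cdot,x)\|_{L^2_t}$ for $j\le0$), and Agmon's inequality $\sup_t|F|^2\lesssim\|F\|_{L^2_t}\,\|\partial_tF\|_{L^2_t}$, kept in this product form rather than split by AM--GM, gives $\sup_t|u^j(t,x)|^2\lesssim2^{2j}\int_{\R}|u^j(t,x)|^2\,dt$. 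Integrating over $|x|\le R$ and using Fubini, then summing in $j$ by Cauchy--Schwarz — convergent precisely because $s>\tfrac12$, the pieces $j\le0$ being harmless — one obtains $\int_{|x|\le R}\sup_t|u|^2\,dx\lesssim R\,\|(1+P_a)^{s/2}u_0\|_{L^2}^2$; decomposing $\R^n$ into the annuli $|x|\sim2^m$ against $(1+|x|)^{-\beta}$ and summing $\sum_{m\ge0}2^{m(1-\beta)}$ — convergent precisely because $\beta>1$ — then gives \eqref{1.3}. All of this hinges on the local smoothing estimate
\[
\int_{\R}\int_{|x|\le R}\big|e^{-itP_a}P_jg(x)\big|^2\,dx\,dt\ \lesssim\ R\,2^{-j}\,\|P_jg\|_{L^2(\R^n)}^2,\qquad R>0,\ j\in\mathbb Z,
\]
with constant independent of $R$ and $j$ (equivalently $\||\nabla|^{1/2}e^{-itP_a}g\|_{L^2_tL^2(|x|\le R)}\lesssim R^{1/2}\|g\|_{L^2}$).

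To prove this bound I would again pass through spherical harmonics: by orthogonality of the $Y_{k,\ell}$ and unitarity of $\mathcal H_{\nu(k)}$ it decouples into a sum over $k,\ell$ of half-line estimates of the form
\[
\int_{\R}\int_0^R\Big|\int_0^\infty e^{-it\rho^2}\,\sqrt{r\rho}\,J_\nu(r\rho)\,\widehat h(\rho)\,d\rho\Big|^2\,dr\,dt\ \lesssim\ R\int_0^\infty\rho^{-1}|\widehat h(\rho)|^2\,d\rho ,
\]
and the decisive point is that the implied constant must be uniform in the Bessel order $\nu\ge0$ — this is exactly what lets the angular sum reassemble losslessly to $\|g\|_{\dot H^{-1/2}(\R^n)}$ (once more via Hardy/L\"owner--Heinz). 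I would establish the half-line estimate by $TT^\ast$/duality: the resulting kernel involves $\int_0^R\sqrt{r\rho}\,J_\nu(r\rho)\,\sqrt{r\eta}\,J_\nu(r\eta)\,dr$, and feeding in $\nu$-uniform bounds for $J_\nu$ in the three regimes — super-exponential smallness when $r\rho\ll\nu$, the transition bound $|J_\nu(z)|\lesssim\nu^{-1/3}$ when $r\rho\sim\nu$, and $\sqrt{z}\,J_\nu(z)=\sqrt{2/\pi}\cos(z-\tfrac{\nu\pi}{2}-\tfrac\pi4)+O(z^{-1})$ when $r\rho\gg\nu$ — one reduces the principal contribution to exactly the kernel Vega treats for the free propagator (stationary phase plus a Schur test), while the remaining pieces (the $O(z^{-1})$ corrections, the ``anti-diagonal'' term $\propto\cos(r(\rho+\eta)-\nu\pi-\tfrac\pi2)$ coming from the second branch of $J_\nu$, and the region $r\rho\lesssim\max(\nu,1)$) are controlled with room to spare.

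The hard part is exactly this uniformity in $\nu$: keeping every oscillatory-integral and Schur-test estimate free of any factor growing in $\nu$, especially through the transition zone $r\rho\sim\nu$, where the Bessel asymptotics are of Airy type and where the anti-diagonal contribution — which has no analogue in the potential-free equation — is concentrated. A perhaps cleaner route to the same uniform half-line smoothing estimate runs through limiting-absorption resolvent bounds for $L_\nu$ in weighted $L^2$ spaces, which are classical and known to be uniform in $\nu$. Granting this single estimate, everything else — the exchange of $\sup_t$ for $L^2_t$, the spherical-harmonic orthogonality, and the two dyadic summations — is routine.
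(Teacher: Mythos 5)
Your plan is sound and, at bottom, it is the same proof as the paper's, repackaged: spherical harmonics plus the Hankel transform $\mathcal{H}_{\nu(k)}$, an exchange of $\sup_t$ for an $L^2_t$ quantity at the cost of half a derivative, uniform-in-$\nu$ Bessel bounds, two dyadic summations producing exactly the conditions $s>\tfrac12$ and $\beta>1$, and the equivalence of $H^s$ with the $P_a$-adapted Sobolev norm (the paper's Lemma \ref{orthogonality} and Lemma \ref{sobolev}, from Burq--Planchon--Stalker--Tahvildar-Zadeh). The differences are organizational. Where you apply Agmon's inequality to spectral Littlewood--Paley pieces (using that $u^j(\cdot,x)$ has time frequency $\sim 2^{2j}$), the paper uses the one-dimensional embedding $\dot H^{1/2-}_t\cap\dot H^{1/2+}_t\hookrightarrow L^\infty_t$ and bounds $\partial_t^\alpha u$, $\alpha=\tfrac12\pm$, in a weighted space-time $L^2$ (Proposition \ref{pro1}); where you isolate a frequency-localized local smoothing estimate and then sum over annuli $|x|\sim 2^m$, the paper keeps the weight $(1+|x|)^{-\beta}$ inside one estimate and decomposes dyadically in both $r$ and $\rho$ (Proposition \ref{pro2}). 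The one place where your plan is substantially heavier than necessary is the proof of the key half-line smoothing bound: you propose a $TT^*$ kernel analysis with stationary phase, or uniform limiting-absorption resolvent estimates, and you flag the $\nu$-uniformity through the Airy transition zone and the ``anti-diagonal'' term as the hard part. In fact no kernel analysis is needed: since the time dependence is exactly $e^{-it\rho^2}$, Plancherel in $t$ (after the change of variables $\sigma=\rho^2$) converts the left-hand side of your half-line estimate into a constant times $\int_0^R\int_0^\infty r\,|J_\nu(r\rho)|^2\,|\widehat h(\rho)|^2\,\mathrm{d}\rho\,\mathrm{d}r$, and the uniform dyadic bound $\int_{R'}^{2R'}|J_\nu(s)|^2\,\mathrm{d}s\le C$ (the paper's Lemma \ref{Bessel2}, a consequence of the uniform asymptotics in Lemma \ref{Bessel}, together with the crude bound \eqref{2.4} for $s\lesssim1$) gives $\int_0^R r\,|J_\nu(r\rho)|^2\,\mathrm{d}r\lesssim R/\rho$, hence your smoothing estimate with a constant independent of $\nu$, $R$ and $j$; this is in essence how Propositions \ref{pro1}--\ref{pro2} proceed. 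So the difficulty you single out is already resolved by standard uniform Bessel asymptotics, and once that single lemma is filled in (your way or this shorter way) the rest of your argument -- the Agmon step, the treatment of $j\le0$, the Cauchy--Schwarz summation for $s>\tfrac12$, the annular summation for $\beta>1$, and the $H^s\sim H^s_a$ reduction, which is legitimate in the range $s$ close to $\tfrac12$ -- closes correctly.
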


As a direct consequence of Theorem \ref{thm1}, we have:
\begin{corollary}\label{pointwise}  Let $u_0\in H^s(\R^n)$ with $s>\frac12$ and $n\geq2$. Then
\begin{equation}\label{1.4}
\lim_{t\rightarrow 0}u(t,x)=u_0(x),\quad a.e.~~ x\in\R^n.
\end{equation}
\end{corollary}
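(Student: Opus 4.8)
The plan is to obtain \eqref{1.4} from the maximal estimate \eqref{1.3} by the standard density argument (Stein's maximal principle), with two small adaptations: the solution of \eqref{1.1} is not given by a Fourier multiplier, so a convenient dense class of data has to be selected through the Hankel transform, and the weight $(1+|x|)^{-\beta}$ forces one to work on bounded balls. Throughout, for $w_0\in L^2(\R^n)$ I write $w(t,x)$ for the solution of \eqref{1.1} with datum $w_0$ and $w^{*}(x)=\sup_{|t|>0}|w(t,x)|$ as in \eqref{1.2}, and I fix once and for all some $\beta>1$. The scheme is: (i) prove \eqref{1.4} for $u_0$ in a dense subclass of $H^{s}(\R^n)$; (ii) use Theorem \ref{thm1} to show that the set of data $u_0\in H^{s}(\R^n)$ for which \eqref{1.4} holds is closed in $H^{s}(\R^n)$; then combine (i) and (ii).

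Step (ii) is routine. Set $\Omega u_0(x)=\limsup_{t\to0}|u(t,x)-u_0(x)|$. By linearity of \eqref{1.1}, if $u_0=v_0+w_0$ then $\Omega u_0\le\Omega v_0+\Omega w_0$ and $\Omega w_0(x)\le w^{*}(x)+|w_0(x)|$. Fix $R>0$, $\lambda>0$ and $\varepsilon>0$; choose $v_0$ in the dense subclass with $\|u_0-v_0\|_{H^{s}}<\varepsilon$ and put $w_0=u_0-v_0$, so that $\Omega v_0\equiv0$ and hence $\{x\in B_R:\Omega u_0(x)>\lambda\}\subset\{x\in B_R:w^{*}(x)>\lambda/2\}\cup\{x\in B_R:|w_0(x)|>\lambda/2\}$. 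On $B_R$ one has $(1+|x|)^{-\beta}\ge(1+R)^{-\beta}$, so Chebyshev's inequality together with Theorem \ref{thm1} gives
\[
\bigl|\{x\in B_R:w^{*}(x)>\lambda/2\}\bigr|\le\frac{4(1+R)^{\beta}}{\lambda^{2}}\int_{\R^n}|w^{*}(x)|^{2}\frac{\mathrm{d}x}{(1+|x|)^{\beta}}\le\frac{4C(1+R)^{\beta}}{\lambda^{2}}\,\varepsilon^{2},
\]
while $\bigl|\{x\in B_R:|w_0(x)|>\lambda/2\}\bigr|\le4\varepsilon^{2}/\lambda^{2}$. Letting $\varepsilon\to0$ shows $|\{x\in B_R:\Omega u_0(x)>\lambda\}|=0$; then $\lambda\downarrow0$ and $R\uparrow\infty$ give $\Omega u_0=0$ a.e.\ in $\R^n$, which is exactly \eqref{1.4}.

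The substance lies in Step (i). A natural dense subclass is the span of the functions $v_0(x)=\phi(r)\,Y_{\ell,m}(\theta)$ with only finitely many $(\ell,m)$ present and with the radial profile $\phi$ chosen so that its Hankel transform of order $\nu_\ell=\sqrt{(\ell+\frac{n-2}{2})^{2}+a}$ (the order that diagonalizes $P_a$ on the $\ell$-th spherical-harmonic subspace) belongs to $C_c^\infty((0,\infty))$. For such $v_0$, the spherical-harmonics plus Hankel-transform representation of the solution operator for \eqref{1.1} expresses $v(t,x)$ as a finite sum of absolutely convergent integrals $Y_{\ell,m}(\theta)\int_{0}^{\infty}J_{\nu_\ell}(r\rho)\,e^{it\rho^{2}}\,\widetilde{\phi}(\rho)\,\rho\,\mathrm{d}\rho$ with $\widetilde{\phi}\in C_c^\infty((0,\infty))$, and dominated convergence yields $v(t,x)\to v_0(x)$ for every $x\in\R^n$ as $t\to0$. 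The remaining point is the density of this class in $H^{s}(\R^n)$: this follows from its density in $L^{2}$ in the range of $s$ (depending on $a$) where $\|f\|_{H^{s}}$ is comparable to $\|P_a^{s/2}f\|_{L^{2}}$, since the multiplier $\rho^{s}$ preserves $C_c^\infty((0,\infty))$, whereas for larger $s$ one must argue separately because the point $x=0$ is then "seen" by $H^{s}$. Justifying the Hankel-transform diagonalization of the singular operator $P_a$ on each angular component, controlling the behaviour of $v(t,x)$ near the origin, and settling the density of the test class in the full range $s>1/2$ are the main obstacles; granting these, Steps (i) and (ii) close the argument.
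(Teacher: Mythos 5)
Your overall scheme (a.e.\ convergence on a dense subclass plus the weighted maximal bound of Theorem \ref{thm1} via Chebyshev on balls $B_R$) is exactly the standard argument that the paper has in mind when it calls Corollary \ref{pointwise} a ``direct consequence''; the paper itself gives no further details, and your Step (ii) is complete and correct as written. The issue is that you stop short in Step (i), explicitly ``granting'' three obstacles. Two of them are not real obstacles. The Hankel-transform diagonalization you invoke is precisely the paper's derivation of \eqref{3.8} from Lemma \ref{Hankel}(iv), so you may simply quote it; and the behaviour near $x=0$ is irrelevant, since a.e.\ convergence only concerns $x\neq0$, and for fixed $r>0$ and $b^0_{k,\ell}=\mathcal{H}_{\nu(k)}a^0_{k,\ell}\in C_c^\infty((0,\infty))$ the integrand in \eqref{3.8} is dominated, uniformly in $t$, by the $t$-independent integrable function $\big|(r\rho)^{-\frac{n-2}2}J_{\nu(k)}(r\rho)b^0_{k,\ell}(\rho)\big|\rho^{n-1}$ on a compact $\rho$-set (the factor $(r\rho)^{-\frac{n-2}2}J_{\nu(k)}(r\rho)$ is continuous there), so dominated convergence gives $v(t,x)\to v_0(x)$ at every $x\neq0$.

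The genuine remaining gap is the density of your test class in $H^{s}$, which you leave open ``for larger $s$''. It closes as follows, and you should say so: since $H^{s}\subset H^{s'}$ for $s'<s$ and the conclusion \eqref{1.4} only gets stronger as $s$ increases, you may assume $\tfrac12<s<1\leq 1+\min\{\tfrac{n-2}2,(\tfrac{(n-2)^2}4+a)^{\frac12}\}$, which places $s$ inside the range of validity of Lemma \ref{sobolev}. That lemma (with $s'=0$, together with its $s=0$ case) says the $H^{s}$ norm is equivalent to the weighted Hankel-side expression in \eqref{2.18}; hence, given $u_0\in H^{s}$, truncating the spherical-harmonics expansion to finitely many $(k,\ell)$, cutting each $b^0_{k,\ell}$ to a compact subset of $(0,\infty)$ and mollifying produces a function of your class whose $H^{s}$ distance to $u_0$ is as small as you wish. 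With this density statement in hand (and without any separate argument ``for larger $s$'' or any norm equivalence $\|f\|_{H^s}\sim\|P_a^{s/2}f\|_{L^2}$, which you do not need), your Steps (i) and (ii) do prove the corollary.
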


\begin{theorem}\label{thm2}  Let $B^n$ be the open unit ball in $\R^n$. Assume that there exists a constant $C$
independent of $u_0$ such that
\begin{equation}\label{1.5}
\begin{split}
\int_{B^n}| u^*(x)|^2\mathrm{d}x\leq C\|u_0\|^2_{H^{s}(\R^n)}, \quad
\forall~ u_0(x)\in H^s(\R^n).
\end{split}
\end{equation}
Then $s\geq\frac14$.
\end{theorem}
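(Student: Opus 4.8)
The strategy is to adapt the classical Dahlberg--Kenig counterexample for the one-dimensional Schr\"odinger maximal function to the radial sector of $P_a$, in which the Hankel transform plays the role that the Fourier transform plays for $-\Delta$. Since \eqref{1.5} is assumed for \emph{all} $u_0\in H^s(\mathbb{R}^n)$, it suffices to exhibit a one-parameter family of radial data that would contradict \eqref{1.5} whenever $s<\frac14$. For radial $u_0$ write $g:=\mathcal{H}_{\nu_0}u_0$, where $\mathcal{H}_{\nu_0}$ is the Hankel transform of order $\nu_0:=\sqrt{(n-2)^2/4+a}>0$; recalling that $\mathcal{H}_{\nu_0}$ is an $L^2(r^{n-1}\,dr)$ isometry diagonalising $P_a$ on radial functions, the solution of \eqref{1.1} is
\[
u(t,r)=\int_0^\infty (r\rho)^{-\frac{n-2}{2}}J_{\nu_0}(r\rho)\,e^{-it\rho^2}\,g(\rho)\,\rho^{n-1}\,d\rho .
\]
For a large parameter $N$ I would take the datum defined by $g(\rho)=\rho^{-\frac{n-1}{2}}\mathbf{1}_{[N,N+\sqrt N]}(\rho)$, so that $g\in L^2(r^{n-1}dr)$ and $u_0\in H^s$. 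Invoking the equivalence of the Sobolev norms built from $P_a$ and from $-\Delta$ (valid in the range $0\le s<\frac14$ since $\nu_0>0$; see the references in the Introduction) together with Plancherel for $\mathcal{H}_{\nu_0}$,
\[
\|u_0\|_{H^s(\mathbb{R}^n)}^2\ \sim\ \int_0^\infty (1+\rho^2)^s|g(\rho)|^2\rho^{n-1}\,d\rho\ =\ \int_N^{N+\sqrt N}(1+\rho^2)^s\,d\rho\ \sim\ N^{2s+1/2}.
\]

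The core step is a pointwise lower bound for $u^*$ on a set of measure bounded below. Inserting the large-argument asymptotics $J_{\nu_0}(y)=\sqrt{2/(\pi y)}\,\cos(y-\theta)+O(y^{-3/2})$, where $\theta:=\frac{\nu_0\pi}{2}+\frac{\pi}{4}$ --- legitimate since $r\rho\ge C_1\sqrt N$ on the region below --- the solution becomes
\[
u(t,r)=c_n\,r^{-\frac{n-1}{2}}\int_N^{N+\sqrt N}\cos(r\rho-\theta)\,e^{-it\rho^2}\,d\rho\ +\ O\!\big(r^{-\frac{n+1}{2}}N^{-1/2}\big).
\]
I would then split $\cos(r\rho-\theta)$ into its $e^{\pm ir\rho}$ branches and, for each $0<r\le c_0$ with $c_0$ a small absolute constant, choose the time $t=t(r):=r/(2N)>0$. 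Completing the square shows that for the $e^{+ir\rho}$ branch the phase $r\rho-t\rho^2$ is stationary at $\rho=N$ and oscillates by at most $r/2$ over $[N,N+\sqrt N]$, so this branch contributes an integral of modulus $\gtrsim\sqrt N$; the $e^{-ir\rho}$ branch has phase derivative of size $\sim r$ with no critical point, so one integration by parts bounds it by $O(1/r)$. Since moreover $r^{-\frac{n+1}{2}}N^{-1/2}=(rN)^{-1}\,r^{-\frac{n-1}{2}}\sqrt N$, both error terms are negligible against the main term once $r\ge C_1N^{-1/2}$ with $C_1$ a large absolute constant, whence
\[
u^*(x)\ \ge\ \big|u\big(t(|x|),x\big)\big|\ \gtrsim\ |x|^{-\frac{n-1}{2}}\sqrt N,\qquad C_1N^{-1/2}\le |x|\le c_0 .
\]

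It then remains to integrate this over the annulus (which lies in $B^n$ once $c_0<1$):
\[
\int_{B^n}|u^*(x)|^2\,dx\ \gtrsim\ \int_{C_1N^{-1/2}}^{c_0} r^{-(n-1)}\,N\,r^{n-1}\,dr\ =\ N\big(c_0-C_1N^{-1/2}\big)\ \gtrsim\ N
\]
for all large $N$. Comparing this with the $H^s$-norm computed above, the hypothesis \eqref{1.5} would force $N\lesssim N^{2s+1/2}$ uniformly in $N$, i.e. $s\ge\frac14$, which is the assertion of Theorem \ref{thm2}.

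I expect the only genuinely delicate point to be the error analysis in the second step: controlling the remainder in the Bessel asymptotics together with the non-stationary $e^{-ir\rho}$ branch of the cosine is precisely what pins the length of the frequency block to $\sqrt N$ (matching the stationary-phase scale $t(r)^{-1/2}\sim\sqrt N$) and restricts the spatial variable to $|x|\gtrsim N^{-1/2}$. Everything else is the usual Knapp-type bookkeeping, now carried out with the Hankel transform in place of the Fourier transform; one should, in addition, be careful to verify that the $P_a$- and $-\Delta$-Sobolev norms are genuinely comparable on the chosen data in the relevant range $0\le s<\frac14$.
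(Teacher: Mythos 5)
Your proposal is correct and follows essentially the same route as the paper: a radial datum whose Hankel transform (of order $\nu(0)$) is supported in a frequency block $[N,N+O(\sqrt N)]$, the large-argument Bessel asymptotics splitting the solution into a stationary branch (with $t(r)\sim r/(2N)$, giving the main lower bound), a non-stationary branch and a remainder handled by integration by parts, and finally a comparison with $\|u_0\|_{H^s}^2\sim N^{2s+1/2}$ (up to normalization) forcing $s\ge\frac14$. The only differences are cosmetic — the paper uses a smooth cutoff, works on the fixed annulus $\frac1{100}\le|x|\le\frac18$ (so $r\rho\gg1$ and the error analysis is immediate), and justifies the comparison between the $H^s$ norm and the Hankel-based norm of the datum via its almost-orthogonality Lemma \ref{orthogonality}, which is exactly the ingredient you invoke by citation.
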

With this in mind, Theorem \ref{thm1} is far from being sharp.
Assuming that the initial data possesses additional angular regularity,
we have
\begin{theorem}\label{thm3}  Let $B^n$ be the open unit ball in $\R^n$ and $\epsilon>0$. Then there exists a constant $C$
independent of $u_0$ such that
\begin{equation}
\begin{split}
\int_{B^n}| u^*(x)|^2\mathrm{d}x\leq
C\|u_0\|^2_{H^{\frac14}_{r}H^{\frac{n-1}2+\epsilon}_{\theta}},
\end{split}
\end{equation}
where for $s,s'\geq0$
\begin{equation*}
\begin{split}
H^{s}_{r}H^{s'}_{\theta}=\Big\{g:
\|g\|_{H^{s}_{r}H^{s'}_{\theta}}:=\big\|(1-\Delta_\theta)^{\frac
{s'}2}\big((1-\Delta)^{\frac
s2}g\big)\big\|_{L^{2}_{r^{n-1}\mathrm{d}r}(\R^+;L^{2}_{\theta}(\mathbb{S}^{n-1}))}\Big\}.
\end{split}
\end{equation*}
Here $\Delta_{\theta}$ denotes the Laplace-Beltrami operator on
$\mathbb{S}^{n-1}$.
\end{theorem}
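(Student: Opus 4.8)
The plan is to diagonalize the flow $e^{itP_a}$ simultaneously in the angular and radial variables and thereby reduce the estimate to a one--dimensional (radial) Schr\"odinger maximal inequality at the sharp regularity $1/4$ that is \emph{uniform in the order of the underlying Hankel transform}. Expand $u_0(x)=\sum_{k\ge0}\sum_{\ell=1}^{d(k)}a_{k,\ell}(r)Y_{k,\ell}(\theta)$ in an $L^2(\mathbb{S}^{n-1})$--orthonormal basis of spherical harmonics, $d(k)\simeq\langle k\rangle^{n-2}$. On the $k$--th eigenspace of $\Delta_\theta$ the operator $P_a$ acts as a Bessel operator of index $\nu_k:=\sqrt{(k+\tfrac{n-2}{2})^2+a}$, with $\nu_k\ge\nu_0:=\sqrt{(\tfrac{n-2}{2})^2+a}>0$ and $\nu_k\simeq k$; hence, writing $\mathcal{H}_\nu$ for the modified Hankel transform of order $\nu$ (kernel $(r\rho)^{-(n-2)/2}J_\nu(r\rho)$),
$$u(t,x)=\sum_{k,\ell}Y_{k,\ell}(\theta)\int_0^\infty e^{-it\rho^2}(r\rho)^{-\frac{n-2}{2}}J_{\nu_k}(r\rho)\,\big(\mathcal{H}_{\nu_k}a_{k,\ell}\big)(\rho)\,\rho^{n-1}\,d\rho.$$

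Put $F_{k,\ell}(r):=\sup_{t}\big|\int_0^\infty e^{-it\rho^2}(r\rho)^{-\frac{n-2}{2}}J_{\nu_k}(r\rho)(\mathcal{H}_{\nu_k}a_{k,\ell})(\rho)\rho^{n-1}d\rho\big|$, so that $u^*(x)\le\sum_{k,\ell}F_{k,\ell}(r)\,|Y_{k,\ell}(\theta)|$. Applying Cauchy--Schwarz in $(k,\ell)$ with the conjugate weights $\langle k\rangle^{\pm(\frac{n-1}{2}+\epsilon)}$, using the addition formula $\sum_\ell|Y_{k,\ell}(\theta)|^2=d(k)/|\mathbb{S}^{n-1}|$ together with $\sum_k d(k)\langle k\rangle^{-(n-1)-2\epsilon}\simeq\sum_k\langle k\rangle^{-1-2\epsilon}<\infty$, and integrating over $B^n$ in polar coordinates gives
$$\int_{B^n}|u^*(x)|^2\,dx\ \lesssim\ \sum_{k,\ell}\langle k\rangle^{n-1+2\epsilon}\int_0^1 F_{k,\ell}(r)^2\,r^{n-1}\,dr.$$
Recalling that for $s'=\frac{n-1}{2}+\epsilon$ the factor $(1-\Delta_\theta)^{s'/2}$ multiplies $Y_{k,\ell}$ by $(1+k(k+n-2))^{s'/2}\simeq\langle k\rangle^{\frac{n-1}{2}+\epsilon}$, Theorem~\ref{thm3} reduces to the \emph{order--uniform} bound
$$\int_0^1 F_{k,\ell}(r)^2\,r^{n-1}\,dr\ \lesssim\ \int_0^\infty(1+\rho^2)^{1/4}\,\big|(\mathcal{H}_{\nu_k}a_{k,\ell})(\rho)\big|^2\,\rho^{n-1}\,d\rho,$$
with implicit constant independent of $k,\ell$. (Trading the $P_a$--adapted Sobolev norm on the right for the $(1-\Delta)^{1/4}$--based norm $\|u_0\|_{H^{1/4}_rH^{\frac{n-1}{2}+\epsilon}_\theta}$ of the statement is harmless, by the equivalence of the associated Sobolev spaces for $a>-(n-2)^2/4$; cf.\ \cite{MZZ}.)

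For the last display I would substitute $g=(1+\rho^2)^{1/8}\mathcal{H}_{\nu_k}a_{k,\ell}$, linearize $\sup_t$ by a measurable function $t=t(r)$, and run the $TT^*$ method. Writing
$$K_\nu(r,r')=\int_0^\infty e^{-i(t(r)-t(r'))\rho^2}(rr'\rho^2)^{-\frac{n-2}{2}}J_{\nu}(r\rho)J_{\nu}(r'\rho)\,(1+\rho^2)^{-1/4}\,\rho^{n-1}\,d\rho,$$
it suffices to show $\sup_{r'\in(0,1)}\int_0^1|K_\nu(r,r')|\,r^{n-1}\,dr\lesssim1$ uniformly in $\nu\ge\nu_0$ and in $t(\cdot)$. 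I would split the $\rho$--integral into the three characteristic regimes of $J_\nu$: the range $\rho\lesssim\nu/\max(r,r')$, where $J_\nu$ is exponentially small and the weight $(1+\rho^2)^{-1/4}$ adds decay; the turning--point zone $\rho\sim\nu/r$ or $\nu/r'$, where $|J_\nu(z)|\lesssim\nu^{-1/3}$ on a short $\rho$--interval; and the oscillatory range $\rho\gtrsim\nu/\min(r,r')$, where, uniformly in $\nu$, $z^{1/2}J_\nu(z)=\sqrt{2/\pi}\cos\big(z-\tfrac{\nu\pi}{2}-\tfrac{\pi}{4}\big)$ up to a controlled remainder. In the oscillatory range one expands the product of cosines and reduces the leading term to the one--dimensional oscillatory integrals $\int\rho^{-1/2}e^{-i\tau\rho^2+i(r\pm r')\rho}\,d\rho$ treated by Vega~\cite{Vega} via stationary phase / van der Corput in $\rho$; combined with the fact that this range forces $\rho\gtrsim\nu/\min(r,r')$ — which supplies the factor of $\min(r,r')$ needed to absorb the $(rr'\rho^2)^{-(n-2)/2}$ singularity near the origin — and with the strictly better contributions of the first two ranges, this produces a kernel that is integrable in $r$ uniformly in $\nu$, whence the Schur bound.

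The main obstacle is exactly this last step: obtaining the one--dimensional maximal estimate at the endpoint $s=1/4$ \emph{with constants independent of the order $\nu_k$} (equivalently, of the angular degree $k$). This requires uniform (Olver--type) asymptotics for $J_\nu$ across all three regimes, and in particular a careful treatment of the turning--point region $r\rho\sim\nu$ and of the $\nu$--dependent remainders in the oscillatory expansion, so that no positive power of $\nu$ is lost — away from the turning point the analysis collapses to the classical one--dimensional case. The remaining ingredients — the equivalence of the $P_a$-- and $(-\Delta)$--adapted Sobolev spaces at regularity $1/4$, the justification of the interchanges of sum, supremum and integral (by truncating the spherical--harmonic expansion and passing to the limit), and the angular exponent bookkeeping — are routine.
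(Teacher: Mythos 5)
Your reduction is exactly the paper's: linearize the supremum via a measurable $t(r)$ (Carleson), expand in spherical harmonics, apply Cauchy--Schwarz in $(k,\ell)$ with the weights $\langle k\rangle^{\pm(\frac{n-1}{2}+\epsilon)}$ (the convergence $\sum_k d(k)\langle k\rangle^{-(n-1)-2\epsilon}<\infty$ with $d(k)\simeq\langle k\rangle^{n-2}$ is precisely why the angular loss is $\frac{n-1}{2}+\epsilon$), and convert the resulting weighted Hankel-side norms into $\|u_0\|_{H^{1/4}_rH^{\frac{n-1}{2}+\epsilon}_\theta}$ via the norm equivalence (Lemma \ref{sobolev}, which rests on the almost orthogonality of Lemma \ref{orthogonality}). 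Up to this point your argument is sound and matches the paper, modulo the paper's cleaner normalization $g(\rho)=b^0_{k,\ell}(\rho)\rho^{\frac{n-1}{2}+\frac14}$ and $\tilde J_\nu(s)=s^{1/2}J_\nu(s)$, which removes the $(rr'\rho^2)^{-\frac{n-2}{2}}$ singularity you have to argue away by hand.

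The genuine gap is the step you yourself flag as the ``main obstacle'': the one-dimensional maximal estimate at regularity $1/4$ with constant independent of the order $\nu$, i.e. $\int_0^1|\int_I e^{it(r)\rho^2}\tilde J_\nu(r\rho)\rho^{-1/4}g(\rho)\,\mathrm{d}\rho|^2\mathrm{d}r\le C\|g\|_{L^2(I)}^2$ uniformly in $\nu\ge0$, $I$ and the measurable function $t(\cdot)$. In the paper this is not proved but imported wholesale as Lemma \ref{lem1}, due to Gigante and Soria \cite{GS}; it is the substantive content of the whole theorem, and its proof is a paper in its own right. Your $TT^*$/Schur sketch is a plausible plan, but the heart of the matter --- uniformity in $\nu$ at the endpoint $s=1/4$, in particular the turning-point regime $r\rho\sim\nu$ where the Airy-type bound $|J_\nu|\lesssim\nu^{-1/3}$ must be combined with the length of that regime and with the oscillatory remainders without losing any positive power of $\nu$ --- is exactly what you leave unresolved. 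As written, the proposal is therefore not a complete proof: it reduces Theorem \ref{thm3} correctly, but then either needs the citation to \cite{GS} (which closes the argument immediately, as in the paper) or a genuine proof of that uniform estimate, which the sketch does not supply.
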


{\bf Remarks:}

$\mathrm{i}).$ This result implies that the pointwise convergence of
solutions to \eqref{1.1} holds for radial initial data $u_0\in
H^{s}(\R^n)$ with $s\geq1/4$.\vspace{0.2cm}

$\mathrm{ii}).$ This result is an analogue of Theorem 1.1 in
\cite{CLS}. We remark that the parameter $\epsilon$ in \cite{CLS}
should be corrected for $\epsilon>1/2$ while not $\epsilon>0$. Thus,
we generalize and improve the result in \cite{CLS} by making use of
a finer result proved in \cite{GS}.\vspace{0.2cm}

Now we introduce some notations. We use $A\lesssim B$ to denote the
statement that $A\leq CB$ for some large constant $C$ which may vary
from line to line and depend on various parameters, and similarly
use $A\ll B$ to denote the statement $A\leq C^{-1} B$. We employ
$A\sim B$ to denote the statement that $A\lesssim B\lesssim A$. If
the constant $C$ depends on a special parameter other than the
above, we shall denote it explicitly by subscripts. We briefly write
$A+\epsilon$ as $A+$ or $A-\epsilon$ as $A-$ for $0<\epsilon\ll1$.
Throughout this paper, pairs of conjugate indices are written as $p,
p'$, where $\frac{1}p+\frac1{p'}=1$ with $1\leq
p\leq\infty$.\vspace{0.2cm}

This paper is organized as follows: In the section 2, we mainly
revisit the property of the Bessel functions and the Hankel
transforms associated with $-\Delta+\frac{a}{|x|^2}$. Section 3 is
devoted to the proofs of the theorems.\vspace{0.2cm}

{\bf Acknowledgments:}\quad  The authors thank the referee and the
associated editor for their invaluable comments and suggestions
which helped improve the paper greatly.  This work was supported in part by the NSF of China under grant No.11171033, No.11231006, and No.11371059.
The second author was partly
supported by the Fundamental Research Foundation of
BIT(20111742015) and RFDP(20121101120044).  C. Miao was also supported  by Beijing Center for Mathematics and Information
Interdisciplinary Sciences.

\section{Preliminary}

In this section, we first list some results about the Hankel
transform and the Bessel functions and then show a characterization
of Sobolev norm in the Hankel transform version. \vspace{0.2cm}

We begin with recalling the expansion formula with respect to the
spherical harmonics. For more details, we refer to Stein-Weiss
\cite{SW}. For the sake of convenience, let
\begin{equation}\label{2.1}
\xi=\rho \omega \quad\text{and}\quad x=r\theta\quad\text{with}\quad
\omega,\theta\in\mathbb{S}^{n-1}.
\end{equation}
For any $g\in L^2(\R^n)$, the expansion formula with respect to the
spherical harmonics yields
\begin{equation*}
g(x)=\sum_{k=0}^{\infty}\sum_{\ell=1}^{d(k)}a_{k,\ell}(r)Y_{k,\ell}(\theta)
\end{equation*}
where
\begin{equation*}
\{Y_{k,1},\ldots, Y_{k,d(k)}\}
\end{equation*}
is the orthogonal basis of the spherical harmonics space of degree
$k$ on $\mathbb{S}^{n-1}$, called $\mathcal{H}^{k}$, with the
dimension
\begin{equation*}
d(k)=\frac{2k+n-2}{k}C^{k-1}_{n+k-3}\simeq \langle k\rangle^{n-2}.
\end{equation*}
We remark that for $n=2$, the dimension of $\mathcal{H}^{k}$ is a
constant, which is independent of $k$. Obviously, we have the
orthogonal decomposition
\begin{equation*}
L^2(\mathbb{S}^{n-1})=\bigoplus_{k=0}^\infty \mathcal{H}^{k}.
\end{equation*}
By orthogonality, it gives
\begin{equation}\label{2.2}
\|g(x)\|_{L^2_\theta(\mathbb{S}^{n-1})}=\|a_{k,\ell}(r)\|_{\ell^2_{k,\ell}}.
\end{equation}
From
$-\Delta_{\theta}Y_{k,\ell}(\theta)=k(k+n-2)Y_{k,\ell}(\theta)$, the
fractional power of $1-\Delta_{\theta}$ can be written explicitly
\cite{MNNO}
\begin{equation}\label{a2.2}
(1-\Delta_{\theta})^{\frac
s2}g(x)=\sum_{k=0}^{\infty}\sum_{\ell=1}^{d(k)}(1+k(k+n-2))^{\frac
s2}a_{k,\ell}(r)Y_{k,\ell}(\theta).
\end{equation}

For our purpose, we need the Fourier transform of
$a_{k,\ell}(r)Y_{k,\ell}(\theta)$. Theorem 3.10 in \cite{SW} asserts
the Hankel transform formula
\begin{equation}\label{2.3}
\hat{g}(\rho\omega)\sim\sum_{k=0}^{\infty}\sum_{\ell=1}^{d(k)}
i^{k}Y_{k,\ell}(\omega)\rho^{-\frac{n-2}2}\int_0^\infty
J_{k+\frac{n-2}2}(2\pi r\rho)a_{k,\ell}(r)r^{\frac n2}\mathrm{d}r.
\end{equation}
Here the Bessel function $J_k(r)$ of order $k$ is defined by the
integral
\begin{equation*}
J_k(r)=\frac{(r/2)^k}{\Gamma(k+\frac12)\Gamma(1/2)}\int_{-1}^{1}e^{isr}(1-s^2)^{(2k-1)/2}\mathrm{d
}s\quad\text{with}~ k>-\frac12~\text{and}~ r>0.
\end{equation*}
A simple computation gives the rough estimates
\begin{equation}\label{2.4}
|J_k(r)|\leq
\frac{Cr^k}{2^k\Gamma(k+\frac12)\Gamma(1/2)}\left(1+\frac1{k+1/2}\right),
\end{equation}
where $C$ is a absolute constant. This estimate will be mainly used
when $r\lesssim1$. Another well known asymptotic expansion about the
Bessel function is
\begin{equation}\label{2.5}
J_k(r)=r^{-1/2}\sqrt{\frac2{\pi}}\cos(r-\frac{k\pi}2-\frac{\pi}4)+O_{k}(r^{-3/2}),\quad
\text{as}~ r\rightarrow\infty
\end{equation}
but with a constant depending on $k$ (see \cite{SW}). As pointed out
in \cite{Stein1}, if one seeks a uniform bound for large $r$ and
$k$, then the best one can do is $|J_k(r)|\leq C r^{-\frac13}$. One
will find that this decay doesn't lead to the desirable result.
Moreover, we recall the properties of Bessel function $J_k(r)$ in
\cite{ Stempak,Stein1}, we refer the readers to \cite{MZZ1} for the
detailed proof.
\begin{lemma}[Asymptotics of the Bessel function] \label{Bessel} Assume that $k\in \N$ and $k\gg1$. Let $J_k(r)$ be
the Bessel function of order $k$ defined as above. Then there exist
a large constant $C$ and small constant $c$ independent of $k$ and
$r$ such that:

$\bullet$ when $r\leq \frac k2$
\begin{equation}\label{2.6}
\begin{split}
|J_k(r)|\leq C e^{-c(k+r)};
\end{split}
\end{equation}

$\bullet$ when $\frac k 2\leq r\leq 2k$
\begin{equation}\label{2.7}
\begin{split}
|J_k(r)|\leq C k^{-\frac13}(k^{-\frac13}|r-k|+1)^{-\frac14};
\end{split}
\end{equation}

$\bullet$ when $r\geq 2k$
\begin{equation}\label{2.8}
\begin{split}
 J_k(r)=r^{-\frac12}\sum_{\pm}a_\pm(r,k) e^{\pm ir}+E(r,k),
\end{split}
\end{equation}
where $|a_\pm(r,k)|\leq C$ and $|E(r,k)|\leq Cr^{-1}$.
\end{lemma}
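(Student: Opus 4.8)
The plan is to reduce the whole statement to the classical Schl\"afli (Bessel) integral representation, valid for $k\in\N$,
\begin{equation*}
J_k(r)=\frac1{2\pi}\int_0^{2\pi}e^{i(k\tau-r\sin\tau)}\,\mathrm{d}\tau=\frac1\pi\int_0^\pi\cos\big(k\tau-r\sin\tau\big)\,\mathrm{d}\tau,
\end{equation*}
which coincides with the Poisson-type integral used above to define $J_k$, and then to analyze this oscillatory integral by the method of stationary phase. Set $\phi(\tau)=k\tau-r\sin\tau$, so that $\phi'(\tau)=k-r\cos\tau$ and $\phi''(\tau)=r\sin\tau$; the real critical points on $[0,2\pi]$ are $\tau_0$ and $2\pi-\tau_0$ with $\cos\tau_0=k/r$, and they exist exactly when $r\ge k$. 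The three ranges in the lemma correspond to three qualitatively different pictures: no critical point, with $|\phi'|$ bounded below ($r\le k/2$); a single critical point colliding with the degenerate point $\tau=0$ of $\phi''$ --- the turning point --- ($k/2\le r\le 2k$); and a nondegenerate critical point sitting a fixed distance from $\tau=0,\pi$ ($r\ge 2k$). For the first range one does not even need the integral: the elementary bound \eqref{2.4} combined with Stirling's formula gives, for $r\le k/2$,
\begin{equation*}
|J_k(r)|\lesssim\frac{(k/4)^k}{\Gamma(k+\tfrac12)}\lesssim\frac{(e/4)^k}{\sqrt k}\le C e^{-ck},
\end{equation*}
since $e/4<1$; and because $r\le k/2$ forces $k+r\le\tfrac32k$, this is $\le Ce^{-c'(k+r)}$, which is \eqref{2.6}.

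For the range $r\ge 2k$ I would run stationary phase on the periodic representation (on $[0,2\pi]$ the integrand is smooth and $2\pi$-periodic, so there are no boundary terms). Here $\tau_0=\arccos(k/r)\in[\pi/3,\pi/2)$ is at a fixed distance from both $0$ and $\pi$, and $\phi''(\tau_0)=\sqrt{r^2-k^2}$ is comparable to $r$. Using a smooth partition of unity, on the complement of fixed-size neighborhoods of $\tau_0$ and $2\pi-\tau_0$ one has $|\phi'|\gtrsim r$, so repeated integration by parts contributes $O(r^{-N})$; near each critical point the stationary-phase expansion produces a leading term of size $|\phi''(\tau_0)|^{-1/2}\sim r^{-1/2}$ with error $O(r^{-1})$. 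Adding the two contributions yields the uniform refinement of \eqref{2.5},
\begin{equation*}
J_k(r)=\sqrt{\tfrac2\pi}\,(r^2-k^2)^{-1/4}\cos\big(\phi(\tau_0)+\tfrac\pi4\big)+O(r^{-1});
\end{equation*}
writing $(r^2-k^2)^{-1/4}=r^{-1/2}(1-k^2/r^2)^{-1/4}$ (a correction bounded above and below for $r\ge 2k$) and expanding $\cos(\phi(\tau_0)+\tfrac\pi4)$ into $e^{\pm i(\phi(\tau_0)+\pi/4)}=e^{\pm ir}\,e^{\pm i(\phi(\tau_0)+\pi/4\mp r)}$ puts this into the form \eqref{2.8}, with $|a_\pm(r,k)|\le C$ and $|E(r,k)|\le Cr^{-1}$.

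The heart of the matter is the turning-point range $k/2\le r\le 2k$, i.e.\ estimate \eqref{2.7}. I would split according to the size of $|r-k|$. If $|r-k|\ge k^{1/3}$, the critical point (when present) is well separated from $\tau=0$: for $r<k$ the phase is monotone with $|\phi'|\ge k-r$ on $[0,\pi]$, so van der Corput's first-derivative lemma gives $|J_k(r)|\lesssim(k-r)^{-1}\lesssim k^{-1/4}|r-k|^{-1/4}$; for $k<r\le 2k$ one has $\phi''\sim\sqrt{k(r-k)}$ near $\tau_0$, and the second-derivative lemma on a neighborhood of $\tau_0$ together with the first-derivative estimate off it gives $|J_k(r)|\lesssim\big(k(r-k)\big)^{-1/4}=k^{-1/4}|r-k|^{-1/4}$; in both cases this is $\sim k^{-1/3}\big(k^{-1/3}|r-k|+1\big)^{-1/4}$. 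If $|r-k|\le k^{1/3}$, then near $\tau=0$ the phase is essentially cubic, $\phi(\tau)=(k-r)\tau+\tfrac r6\tau^3+O(r\tau^5)$; on a small (slowly shrinking) neighborhood of $0$ the substitution $\tau=(6/r)^{1/3}u$ turns the integral into an Airy-type integral $\int\cos(u^3+\beta u)\,\mathrm{d}u$ with $\beta\sim r^{-1/3}(k-r)$ bounded, contributing $O(r^{-1/3})=O(k^{-1/3})$, while on the rest of $[0,\pi]$ one has $|\phi'|\gtrsim k^{1/3}$ and the first-derivative lemma again gives $O(k^{-1/3})$; hence $|J_k(r)|\lesssim k^{-1/3}$, which is the claimed bound in this range.

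The delicate point --- and the one I expect to be the real obstacle --- is making the cubic/Airy reduction, and in particular its error term, uniform in $k$ throughout the window $|r-k|\lesssim k^{1/3}$, where the phase is only approximately cubic. The cleanest way to package this, and to obtain \eqref{2.7} and \eqref{2.8} simultaneously and uniformly, is to invoke the Langer--Olver uniform asymptotic expansion of $J_k$ in terms of the Airy function on the whole range $r\gtrsim k$, together with the standard bounds $|\mathrm{Ai}(x)|\lesssim(1+|x|)^{-1/4}$ for $x\ge -M$ and $|\mathrm{Ai}(x)|\lesssim e^{-cx^{3/2}}$ for $x\ge 0$; see \cite{MZZ1} for a detailed execution of exactly this scheme.
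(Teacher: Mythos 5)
The paper itself offers no proof of Lemma \ref{Bessel}: the estimates are quoted from \cite{Stempak,Stein1} and the detailed verification is deferred to \cite{MZZ1}, so the only meaningful comparison is with that standard argument --- which is essentially the one you outline. Your reduction to the Schl\"afli integral $J_k(r)=\frac{1}{2\pi}\int_0^{2\pi}e^{i(k\tau-r\sin\tau)}\,\mathrm{d}\tau$ (valid for integer $k$), the Stirling bound giving \eqref{2.6} for $r\le k/2$, the uniform stationary-phase expansion for $r\ge 2k$ (where writing the phase as $r\psi(\tau)$ with $\psi$ depending only on the parameter $k/r\in[0,\tfrac12]$ keeps the critical point uniformly nondegenerate, and where \eqref{2.8} asks only for bounded $a_\pm$, so absorbing the factor $(1-k^2/r^2)^{-1/4}$ and the residual phase into $a_\pm$ is legitimate), and the van der Corput/Airy dichotomy in $k/2\le r\le 2k$ are all correct and follow the standard route. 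Two small points deserve care in a written-out version: for $k<r\le 2k$ with $r-k\ge k^{1/3}$ the second-derivative test cannot be applied on all of $[0,\pi]$ because $\phi''(\tau)=r\sin\tau$ degenerates at $\tau=0$, so one must split off a piece near $0$ where $|\phi'|\gtrsim r-k$ and use the first-derivative test there (your sketch does this only implicitly); and in the window $|r-k|\le k^{1/3}$ you may simply take the neighborhood of $\tau=0$ of length $O(k^{-1/3})$, whose contribution is $O(k^{-1/3})$ by measure alone, while outside it $|\phi'|\gtrsim k^{1/3}$ and $\phi'$ is monotone --- this sidesteps most of the uniformity issue you flag for the cubic approximation. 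For the sharper statement \eqref{2.7} uniformly across the turning point, the clean mechanism is, as you say, the uniform Airy (Langer--Olver) expansion together with $|\mathrm{Ai}(x)|\lesssim(1+|x|)^{-1/4}$, which is precisely how the reference \cite{MZZ1} the paper points to (see also \cite{Watson,Stein1}) proceeds; so your proposal is in substance the proof the paper relies on, not a different one.
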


As a consequence of Lemma \ref{Bessel}, we have
\begin{lemma}\label{Bessel2} Let $R\gg1$. Then there exists a
constant $C$ independent of $k, R$ such that
\begin{equation}\label{2.9}
\int_R^{2R}|J_{k}(r)|^2\mathrm{d}r\leq C.
\end{equation}
\end{lemma}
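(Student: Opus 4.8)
The plan is to split the integration interval $[R,2R]$ according to the three regimes in Lemma \ref{Bessel}, and to bound the contribution of each piece by an absolute constant. Fix $R\gg 1$ and $k\in\mathbb{N}$; if $k$ is bounded we may instead invoke the asymptotic \eqref{2.5} directly, so the interesting case is $k\gg 1$, which is what Lemma \ref{Bessel} covers. Compare $R$ with $k$. First I would handle the case $2R \leq k/2$: then every $r\in[R,2R]$ satisfies $r\leq k/2$, so by \eqref{2.6} we get $|J_k(r)|\leq Ce^{-c(k+r)}$, hence $\int_R^{2R}|J_k(r)|^2\,dr \leq C\int_R^{2R}e^{-2cr}\,dr \leq C$, with the bound in fact exponentially small. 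Symmetrically, when $R\geq 4k$ the whole interval lies in the oscillatory regime $r\geq 2k$, and \eqref{2.8} gives $|J_k(r)|^2 \leq C r^{-1}(|a_+|+|a_-|)^2 + C r^{-2} + \text{cross terms} \lesssim r^{-1}$ (since $|a_\pm|\leq C$, $|E|\leq Cr^{-1}$, and $r\gtrsim 1$), so $\int_R^{2R}|J_k(r)|^2\,dr \lesssim \int_R^{2R} r^{-1}\,dr = \log 2 \leq C$.

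The remaining case is the transitional one, $k/2 < 2R$ and $R < 4k$, i.e. $R$ and $k$ are comparable, say $R\sim k$. Here $[R,2R]$ may meet all three regimes, so I would decompose $[R,2R] = I_1 \cup I_2 \cup I_3$ where $I_1 = [R,2R]\cap[0,k/2]$, $I_2 = [R,2R]\cap[k/2,2k]$, $I_3 = [R,2R]\cap[2k,\infty)$. On $I_1$ and $I_3$ the estimates of the previous paragraph apply verbatim and contribute $O(1)$. The crux is $I_2$: using \eqref{2.7},
\begin{equation*}
\int_{I_2}|J_k(r)|^2\,dr \leq C k^{-\frac23}\int_{k/2}^{2k}\big(k^{-\frac13}|r-k|+1\big)^{-\frac12}\,dr.
\end{equation*}
Substituting $u = k^{-1/3}(r-k)$, so $dr = k^{1/3}\,du$ and $u$ ranges over an interval of length $\lesssim k^{2/3}$, the right-hand side becomes $C k^{-\frac23}\cdot k^{\frac13}\int (1+|u|)^{-\frac12}\,du \lesssim k^{-\frac13}\cdot k^{\frac13} = C$, because $\int_0^{Ck^{2/3}}(1+u)^{-1/2}\,du \sim k^{1/3}$. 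So the transitional piece is also $O(1)$, uniformly in $k$ and $R$.

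The main obstacle — and the only place where the precise exponents matter — is the middle regime $I_2$: one must check that the $k^{-1/3}$ amplitude decay in \eqref{2.7}, squared, exactly compensates the growth of the interval length near the turning point $r=k$ together with the slow $(1+|u|)^{-1/2}$ decay after rescaling. The bookkeeping shows the powers balance ($k^{-2/3}\cdot k^{1/3}\cdot k^{1/3}=k^0$), which is why the bound is a genuine constant rather than a power of $k$; I would present exactly this computation, and dispatch the other regimes in one or two lines each as above.
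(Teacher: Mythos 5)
Your proposal is correct and follows essentially the same route as the paper: split $[R,2R]$ into the three regimes of Lemma \ref{Bessel}, use the exponential bound \eqref{2.6} and the oscillatory form \eqref{2.8} for the outer pieces, and check that in the transitional regime the rescaled integral $k^{-2/3}\cdot k^{1/3}\int(1+|u|)^{-1/2}\,\mathrm{d}u$ over $|u|\lesssim k^{2/3}$ balances to an absolute constant. Your explicit treatment of bounded $k$ via \eqref{2.5} and the full-regime case distinctions are minor elaborations of the same argument, not a different method.
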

\begin{proof}
To prove \eqref{2.9}, we write
\begin{equation*}
\begin{split}
\int_R^{2R}|J_{k}(r)|^2\mathrm{d}r=\int_{I_1}|J_{k}(r)|^2\mathrm{d}r
+\int_{I_2}|J_{k}(r)|^2\mathrm{d}r+\int_{I_3}|J_{k}(r)|^2\mathrm{d}r
\end{split}
\end{equation*}
where $I_1=[R,2R]\cap[0,\frac k 2], I_2=[R,2R]\cap[\frac k 2,2k]$
and $I_3=[R,2R]\cap[2k,\infty]$. By \eqref{2.6} and \eqref{2.8}, we
have
\begin{equation}\label{2.10}
\begin{split}
\int_{I_1}|J_{k}(r)|^2\mathrm{d}r\leq C
\int_{I_1}e^{-cr}\mathrm{d}r\leq C e^{-cR},
\end{split}
\end{equation}
and \begin{equation}\label{2.11}
\begin{split}\int_{I_3}|J_{k}(r)|^2\mathrm{d}r\leq C.\end{split}
\end{equation}
On the other hand, one has by \eqref{2.7}
\begin{equation*}
\begin{split}
\int_{[\frac k 2,2k]}|J_{k}(r)|^2\mathrm{d}r&\leq C \int_{[\frac k
2,2k]}k^{-\frac23}(1+k^{-\frac13}|r-k|)^{-\frac 12}\mathrm{d}r\leq
C.
\end{split}
\end{equation*}
Observing $[R,2R]\cap[\frac k 2,2k]=\emptyset$ unless $R\sim k$, we
obtain
\begin{equation}\label{2.12}
\begin{split}
\int_{I_2}|J_{k}(r)|^2\mathrm{d}r\leq C.
\end{split}
\end{equation}
This together with \eqref{2.10} and \eqref{2.11} yields \eqref{2.9}.
\end{proof}\vspace{0.2cm}

For simplicity, we define
\begin{equation}\label{2.13}
\mu(k)=\frac{n-2}2+k,\quad\text{and}\quad\nu(k)=\sqrt{\mu^2(k)+a}\quad\text{with}\quad
a>-(n-2)^2/4.
\end{equation}
We sometime briefly write $\nu$ as $\nu(k)$. Let $f$ be Schwartz
function defined on $\R^n$, we define the Hankel transform of order
$\nu$
\begin{equation}\label{2.14}
(\mathcal{H}_{\nu}f)(\xi)=\int_0^\infty(r\rho)^{-\frac{n-2}2}J_{\nu}(r\rho)f(r\omega)r^{n-1}\mathrm{d}r,
\end{equation}
where $\rho=|\xi|$, $\omega=\xi/|\xi|$ and $J_{\nu}$ is the Bessel
function of order $\nu$. In particular, the function $f$ is radial,
then we have
\begin{equation}\label{2.15}
(\mathcal{H}_{\nu}f)(\rho)=\int_0^\infty(r\rho)^{-\frac{n-2}2}J_{\nu}(r\rho)f(r)r^{n-1}\mathrm{d}r.
\end{equation}
If $
f(x)=\sum\limits_{k=0}^{\infty}\sum\limits_{\ell=1}^{d(k)}a_{k,\ell}(r)Y_{k,\ell}(\theta)
$, it follows from \eqref{2.3} that
\begin{equation}\label{2.16}
\begin{split}
\hat f(\xi)=\sum_{k=0}^{\infty}\sum_{\ell=1}^{d(k)}2\pi
i^{k}Y_{k,\ell}(\omega)\big(\mathcal{H}_{\mu(k)}a_{k,\ell}\big)(\rho).
\end{split}
\end{equation}

The following properties of the Hankel transform are obtained in
\cite{BPSS,PSS}:
\begin{lemma}\label{Hankel}
Let $\mathcal{H}_{\nu}$ be defined above and $
A_{\nu(k)}:=-\partial_r^2-\frac{n-1}r\partial_r+\big[\nu^2(k)-\big(\frac{n-2}2\big)^2\big]{r^{-2}}.
$ Then

$(\rm{i})$ $\mathcal{H}_{\nu}=\mathcal{H}_{\nu}^{-1}$,

$(\rm{ii})$ $\mathcal{H}_{\nu}$ is self-adjoint, i.e.
$\mathcal{H}_{\nu}=\mathcal{H}_{\nu}^*$,

$(\rm{iii})$ $\mathcal{H}_{\nu}$ is an $L^2$ isometry, i.e.
$\|\mathcal{H}_{\nu}\phi\|_{L^2_\xi}=\|\phi\|_{L^2_x}$,

$(\rm{iv})$ $\mathcal{H}_{\nu}(
A_{\nu}\phi)(\xi)=|\xi|^2(\mathcal{H}_{\nu} \phi)(\xi)$, for
$\phi\in L^2$.
\end{lemma}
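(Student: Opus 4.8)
The plan is to reduce parts $(\mathrm{i})$--$(\mathrm{iii})$ to the classical Hankel inversion theorem via an explicit unitary change of weight, and to derive $(\mathrm{iv})$ from a direct computation based on Bessel's differential equation together with the formal self-adjointness of $A_\nu$ with respect to the measure $r^{n-1}\,dr$.

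For $(\mathrm{i})$--$(\mathrm{iii})$, fix $\omega\in\mathbb{S}^{n-1}$ and view $f(\cdot\,\omega)$ as a profile in $r$. Since $r^{-\frac{n-2}{2}}r^{n-1}=r^{(n-2)/2+1}$, definition \eqref{2.14} can be rewritten as
\[
(\mathcal{H}_\nu f)(\rho\omega)=\rho^{-\frac{n-2}{2}}\int_0^\infty J_\nu(r\rho)\,\big(r^{\frac{n-2}{2}}f(r\omega)\big)\,r\,dr=\big(U^{-1}\widetilde{\mathcal{H}}_\nu U f\big)(\rho\omega),
\]
where $\widetilde{\mathcal{H}}_\nu g(\rho):=\int_0^\infty J_\nu(r\rho)g(r)\,r\,dr$ is the classical Hankel transform on $L^2\big((0,\infty),r\,dr\big)$ and $(Uf)(r\omega):=r^{\frac{n-2}{2}}f(r\omega)$. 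As $\int_0^\infty|f(r\omega)|^2r^{n-1}\,dr=\int_0^\infty|(Uf)(r\omega)|^2 r\,dr$, the map $U$ is a unitary equivalence between the two $r$-variable $L^2$ spaces, so $\mathcal{H}_\nu$ inherits from $\widetilde{\mathcal{H}}_\nu$ the identities $\widetilde{\mathcal{H}}_\nu^{2}=\mathrm{Id}$, $\widetilde{\mathcal{H}}_\nu^{*}=\widetilde{\mathcal{H}}_\nu$ and $\|\widetilde{\mathcal{H}}_\nu g\|_{L^2(r\,dr)}=\|g\|_{L^2(r\,dr)}$; integrating over $\omega$ (Fubini) then gives $(\mathrm{i})$--$(\mathrm{iii})$ on $L^2(\R^n)$. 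The three facts about $\widetilde{\mathcal{H}}_\nu$ are the Fourier--Bessel (Hankel) inversion theorem, which applies because $\nu=\nu(k)=\sqrt{\mu(k)^2+a}$ is real with $\nu(k)>-\tfrac12$: indeed $a>-(n-2)^2/4$ forces $\nu(k)^2>\mu(k)^2-(n-2)^2/4=k^2+k(n-2)\geq0$, so $\nu(k)>0$.

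For $(\mathrm{iv})$, the crux is that for each $\rho>0$ the kernel $w_\rho(r):=(r\rho)^{-\frac{n-2}{2}}J_\nu(r\rho)$ satisfies $A_\nu w_\rho=\rho^2 w_\rho$. This follows from the conjugation identity
\[
r^{\frac{n-2}{2}}\Big(-\partial_r^2-\tfrac{n-1}{r}\partial_r\Big)\big(r^{-\frac{n-2}{2}}h\big)=-h''-\tfrac1r h'+\tfrac{((n-2)/2)^2}{r^2}h,
\]
which gives $r^{\frac{n-2}{2}}A_\nu\big(r^{-\frac{n-2}{2}}h\big)=-h''-\tfrac1r h'+\tfrac{\nu^2}{r^2}h$; taking $h(r)=J_\nu(r\rho)$ and substituting $s=r\rho$ into Bessel's equation $J_\nu''(s)+\tfrac1s J_\nu'(s)+\big(1-\tfrac{\nu^2}{s^2}\big)J_\nu(s)=0$ yields exactly $A_\nu w_\rho=\rho^2 w_\rho$. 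Since $A_\nu=-r^{-(n-1)}\partial_r\big(r^{n-1}\partial_r\,\cdot\,\big)+\big[\nu^2-((n-2)/2)^2\big]r^{-2}$ is formally self-adjoint for $r^{n-1}\,dr$, two integrations by parts give, for $\phi$ whose radial profile is smooth and compactly supported in $(0,\infty)$,
\[
\big(\mathcal{H}_\nu(A_\nu\phi)\big)(\rho\omega)=\int_0^\infty w_\rho(r)(A_\nu\phi)(r\omega)\,r^{n-1}\,dr=\int_0^\infty \big(A_\nu w_\rho\big)(r)\phi(r\omega)\,r^{n-1}\,dr=\rho^2(\mathcal{H}_\nu\phi)(\rho\omega),
\]
with no boundary contributions; by density (using $(\mathrm{iii})$) together with the expansion \eqref{2.16} this extends to $(\mathrm{iv})$ on the domain of $A_\nu$. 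The only point needing genuine care is this density step: identifying the correct self-adjoint realization of $A_{\nu(0)}$ when $a$ is close to the Hardy threshold $-(n-2)^2/4$ is delicate. This, together with $(\mathrm{i})$--$(\mathrm{iii})$, is exactly what is carried out in \cite{BPSS,PSS}, which we may simply cite.
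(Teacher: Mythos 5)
Your proposal is correct, but it is doing more than the paper does: the paper offers no proof of this lemma at all, stating only that these properties ``are obtained in \cite{BPSS,PSS}''. Your argument is the standard one that those references rely on, and the details check out: the weight change $(Uf)(r\omega)=r^{\frac{n-2}{2}}f(r\omega)$ is indeed unitary from $L^2(r^{n-1}\,dr)$ to $L^2(r\,dr)$ and conjugates $\mathcal{H}_\nu$ to the classical Hankel transform, so (i)--(iii) follow from the Hankel inversion/Plancherel theorem once one knows $\nu(k)>-\tfrac12$, which your computation $\nu(k)^2>k^2+k(n-2)\geq0$ (using the strict inequality $a>-(n-2)^2/4$) establishes; and for (iv) your conjugation identity and the substitution of Bessel's equation correctly give $A_\nu\big[(r\rho)^{-\frac{n-2}{2}}J_\nu(r\rho)\big]=\rho^2(r\rho)^{-\frac{n-2}{2}}J_\nu(r\rho)$, after which integration by parts with no boundary terms handles radial profiles that are smooth and compactly supported in $(0,\infty)$. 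The one point where your proof is only a sketch is the one you flag yourself: extending (iv) from that dense class to the operator's actual domain requires identifying the correct self-adjoint realization of $A_{\nu}$ (Friedrichs extension), which is delicate when $\nu(0)$ is small, i.e.\ when $a$ approaches the Hardy threshold; deferring exactly this to \cite{BPSS,PSS} is legitimate and is in effect what the paper does wholesale. Note also that the lemma's phrasing ``for $\phi\in L^2$'' in (iv) should be read, as in your write-up, for $\phi$ in (the domain of) $A_\nu$ or in a suitable dense class, since $A_\nu\phi$ need not be in $L^2$ for general $\phi\in L^2$. In short: your route is a genuine self-contained proof where the paper gives a citation, at the cost of one deferred functional-analytic step; the paper's choice buys brevity, yours buys transparency about where the hypothesis $a>-(n-2)^2/4$ and the value of $\nu(k)$ actually enter.
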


We next recall the following almost orthogonality inequality. Denote
by $P_j$ and $\tilde{P}_j$ the usual dyadic frequency localization
at $|\xi|\sim 2^{j}$ and the localization with respect to
$\big(-\Delta+\frac a{|x|^2}\big)^{\frac12}$. We define the
projectors $M_{jj'}=P_j\tilde{P}_{j'}$ and
$N_{jj'}=\tilde{P}_jP_{j'}$.  More precisely, let $f$ be in  the
$k$'th harmonic subspace, then
\begin{equation*}
P_j f=\mathcal{H}_{\mu(k)}\beta_j
\mathcal{H}_{\mu(k)}f\quad\text{and}\quad \tilde{P}_j
f=\mathcal{H}_{\nu(k)}\beta_j \mathcal{H}_{\nu(k)}f,
\end{equation*}
where $\beta_j(\xi)=\beta(2^{-j}|\xi|)$ with $\beta\in
C_0^\infty(\R^+)$ supported in $[\frac12,2]$. Then we have the
following almost orthogonality inequality \cite{BPSS}:

\begin{lemma}[Almost orthogonality inequality]\label{orthogonality} Let $f\in L^2(\R^n)$, then there exists a constant $C$
independent of $j,j'$ such that
\begin{equation}\label{2.17}
\|M_{j j'}f\|_{L^2(\R^n)},~~ \|N_{j j'}f\|_{L^2(\R^n)}\leq C
2^{-\epsilon|j-j'|}\|f\|_{L^2(\R^n)},
\end{equation}
where $\epsilon<1+\min\{\frac{n-2}2,
(\frac{(n-2)^2}4+a)^{\frac12}\}$.
\end{lemma}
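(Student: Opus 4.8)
The plan is to diagonalize both Littlewood--Paley families against the spherical harmonic decomposition and thereby reduce the whole estimate to a one--dimensional bound for a composition of two Hankel transforms whose orders differ only slightly. Since $P_j$ and $\tilde P_{j'}$ act diagonally on $f=\sum_{k,\ell}a_{k,\ell}(r)Y_{k,\ell}(\theta)$ --- on the $k$-th block $P_j=\mathcal H_{\mu(k)}\beta_j\mathcal H_{\mu(k)}$ and $\tilde P_{j'}=\mathcal H_{\nu(k)}\beta_{j'}\mathcal H_{\nu(k)}$ --- the orthogonality identity \eqref{2.2} reduces \eqref{2.17} to a bound, uniform in $k$, on the radial factor in $L^2(r^{n-1}\mathrm dr)$. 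Using that $\mathcal H_{\mu(k)}$ and $\mathcal H_{\nu(k)}$ are $L^2$--isometries (Lemma \ref{Hankel}(iii)) one strips off the outer transforms, so that
\[
\|M_{jj'}f\|_{L^2(\R^n)}\le\big\|\,\beta_j\,\mathcal H_{\mu(k)}\mathcal H_{\nu(k)}\,\beta_{j'}\,\big\|_{L^2(r^{n-1}\mathrm dr)\to L^2(r^{n-1}\mathrm dr)}\,\|f\|_{L^2(\R^n)},
\]
and similarly for $N_{jj'}$ with $\mu(k)$ and $\nu(k)$ interchanged. Conjugating by the unitary $h\mapsto r^{(n-2)/2}h$ from $L^2(r^{n-1}\mathrm dr)$ onto $L^2(r\,\mathrm dr)$ turns $\mathcal H_\nu$ into the standard Hankel transform $H_\nu g(\rho)=\int_0^\infty J_\nu(r\rho)g(r)\,r\,\mathrm dr$ and leaves the cutoffs $\beta_j$ unchanged, so it suffices to prove
\[
\big\|\,\beta_j\,H_aH_b\,\beta_{j'}\,\big\|_{L^2(r\,\mathrm dr)\to L^2(r\,\mathrm dr)}\ \lesssim\ 2^{-\epsilon|j-j'|},\qquad\{a,b\}=\{\mu(k),\nu(k)\},
\]
with a constant independent of $j,j',k$ (it may depend on $n$ and $a$).

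When $|j-j'|$ is bounded the left side is $\lesssim 1$ because $H_a,H_b$ are isometries and $\beta_j$ is a fixed bounded multiplier, so assume $|j-j'|$ large; say $j>j'$, the other case following by taking adjoints (Lemma \ref{Hankel}(ii)). On the supports $\rho\sim 2^{j}$, $s\sim 2^{j'}$ the two Bessel arguments are well separated, $s\ll\rho$, and the Schwartz kernel of $H_aH_b$ with respect to $s\,\mathrm ds$ is the \emph{off--diagonal} value of the Weber--Schafheitlin discontinuous integral $\int_0^\infty J_a(r\rho)J_b(rs)\,r\,\mathrm dr$; this oscillatory integral converges for $\rho\neq s$ and is given there by
\[
K(\rho,s)=C_{a,b}\,\frac{s^{\lambda}}{\rho^{\lambda+2}}\;{}_2F_1\!\Big(\tfrac{\lambda+\Lambda+2}2,\tfrac{\lambda-\Lambda+2}2;\lambda+1;\tfrac{s^2}{\rho^2}\Big),
\]
where $\lambda$ (resp. $\Lambda$) denotes the order of the Bessel factor carrying the smaller (resp. larger) argument and $C_{a,b}$ contains the factor $\Gamma\big(1-\tfrac{\Lambda-\lambda}2\big)\big/\big[\Gamma\big(\tfrac{\Lambda-\lambda}2\big)\Gamma(\lambda+1)\big]$. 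The essential point is that $|\Lambda-\lambda|=|\mu(k)-\nu(k)|=|a|/(\mu(k)+\nu(k))\to0$ as $k\to\infty$, so the $\Gamma$ in the denominator sits near its simple pole at $0$ and $|C_{\mu(k),\nu(k)}|\sim|a|/\big((\mu(k)+\nu(k))\,\Gamma(\lambda+1)\big)$ for $k\geq1$, while for $k=0$ it is a finite constant; hence $\sup_k|C_{\mu(k),\nu(k)}|<\infty$. Likewise $s^2/\rho^2\leq\tfrac14$ on the relevant set and the hypergeometric parameters either are $O(1)$ or nearly cancel ($\tfrac{\lambda+\Lambda+2}2\approx\lambda+1$), so ${}_2F_1$ is comparable to $(1-s^2/\rho^2)^{-1}=O(1)$, again uniformly in $k$ for fixed $a$.

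Granting $|K(\rho,s)|\lesssim \beta_j(\rho)\beta_{j'}(s)\,s^{\lambda}\rho^{-\lambda-2}$, Schur's test finishes the proof: integrating in $s$ over $s\sim 2^{j'}$ gives a row bound $\lesssim 2^{-(\lambda+2)|j-j'|}$ and integrating in $\rho$ over $\rho\sim 2^{j}$ gives a column bound $\lesssim 2^{-\lambda|j-j'|}$, whose geometric mean yields $\|\beta_jH_aH_b\beta_{j'}\|\lesssim 2^{-(\lambda+1)|j-j'|}$. In every case $\lambda\in\{\mu(k),\nu(k)\}$ and $\mu(\cdot),\nu(\cdot)$ are nondecreasing in $k$, so $\lambda\geq\min\{\mu(0),\nu(0)\}=\min\big\{\tfrac{n-2}2,(\tfrac{(n-2)^2}4+a)^{1/2}\big\}$; combining with the trivial bound for bounded $|j-j'|$ and summing the block estimates proves \eqref{2.17} for any $\epsilon<1+\min\{\mu(0),\nu(0)\}$. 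The main obstacle is this kernel step: one must invoke the Weber--Schafheitlin formula with enough precision to see the cancellation --- a direct dyadic splitting of the $r$--integral via Lemma \ref{Bessel} produces boundary terms at the transition radii $r\sim1/\rho$ and $r\sim1/s$ that are individually too large and only cancel in the full integral --- and then show that neither $C_{a,b}$ nor the hypergeometric correction degenerates as $k\to\infty$ (high angular frequencies, where the orders $\mu(k),\nu(k)$ both blow up while their difference tends to $0$) or as $k=0$ with $a$ approaching the Hardy threshold $-(n-2)^2/4$.
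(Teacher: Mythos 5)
The paper itself gives no proof of Lemma \ref{orthogonality}: it is quoted from \cite{BPSS}, and your argument essentially reconstructs the proof given there --- reduce to a fixed spherical-harmonic block, strip the outer Hankel isometries, evaluate the kernel of $\beta_j\,\mathcal{H}_{\mu(k)}\mathcal{H}_{\nu(k)}\,\beta_{j'}$ by the Weber--Schafheitlin formula, and conclude by Schur's test --- and it correctly produces the decay $2^{-(1+\min\{\mu(0),\nu(0)\})|j-j'|}$, hence the stated range of $\epsilon$. Two caveats, neither fatal: in the Weber--Schafheitlin constant the numerator Gamma factor should be $\Gamma\big(\tfrac{\Lambda+\lambda}{2}+1\big)$ rather than $\Gamma\big(1-\tfrac{\Lambda-\lambda}{2}\big)$ (uniform boundedness in $k$ still holds, since $\Gamma\big(\tfrac{\Lambda+\lambda}{2}+1\big)/\Gamma(\lambda+1)\sim(\lambda+1)^{(\Lambda-\lambda)/2}=O(1)$ while $1/\big|\Gamma\big(\tfrac{\Lambda-\lambda}{2}\big)\big|\sim|\Lambda-\lambda|\to 0$), and the identification of the only conditionally convergent $r$-integral with the closed hypergeometric form, together with the uniform-in-$k$ bound on the ${}_2F_1$ factor for $s^2/\rho^2\le\tfrac14$, requires the regularization argument that you flag but do not carry out; this is precisely the computation done in \cite{BPSS}.
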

As a consequence, we have
\begin{lemma}\label{sobolev}
Let $f\in L^2(\R^n)$ such that $
f(x)=\sum\limits_{k=0}^{\infty}\sum\limits_{\ell=1}^{d(k)}a_{k,\ell}(r)Y_{k,\ell}(\theta)$.
Then for $0\leq s<1+\min\{\frac{n-2}2,
(\frac{(n-2)^2}4+a)^{\frac12}\}$ and $s'\geq0$
\begin{equation}\label{2.18}
\sum_{k=0}^{\infty}\sum_{\ell=1}^{d(k)}\sum_{M\in
2^{\Z}}M^{2s}(1+k)^{2s'}\|b_{k,\ell}(\rho)\chi(\frac{\rho}M)\rho^{\frac{n-1}{2}}\|_{L_\rho^2}^2\sim\|f\|_{\dot{H}^s_r{H}^{s'}_{\theta}}^2,
\end{equation}
where $b_{k,\ell}(\rho)=(\mathcal{H}_{\nu(k)}a_{k,\ell})(\rho)$ and
$\chi\in C_0^\infty(\R^n)$ such that $\text{supp}~\chi\subset
[1/2,1]$.
\end{lemma}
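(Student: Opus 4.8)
The plan is to reduce the claimed norm equivalence to the $s'=0$ case by exploiting the fact that the angular part and the radial/frequency part decouple cleanly. First I would observe that, by the spherical harmonic expansion and the orthogonality relation \eqref{2.2}, together with the explicit action \eqref{a2.2} of $(1-\Delta_\theta)^{s'/2}$ on each $Y_{k,\ell}$, the $H^{s'}_\theta$ weight simply contributes a factor $(1+k(k+n-2))^{s'}\sim (1+k)^{2s'}$ to the $(k,\ell)$ term; so it suffices to prove
\begin{equation*}
\sum_{k,\ell}\sum_{M\in 2^{\Z}}M^{2s}\big\|b_{k,\ell}(\rho)\chi(\rho/M)\rho^{\frac{n-1}2}\big\|_{L^2_\rho}^2\sim \sum_{k,\ell}\big\|(1-\Delta)^{s/2}\big(a_{k,\ell}(r)Y_{k,\ell}(\theta)\big)\big\|_{L^2(\R^n)}^2,
\end{equation*}
i.e. the purely homogeneous Sobolev characterization $\|f\|_{\dot H^s_r}^2\sim \sum_{k,\ell}\sum_M M^{2s}\|b_{k,\ell}\chi(\cdot/M)\rho^{(n-1)/2}\|_{L^2_\rho}^2$ when $f$ lies in the $k$th harmonic subspace. (Strictly the statement is written with $H^s$, not $\dot H^s$; I would either interpret the left side with the dyadic sum ranging over all of $2^{\Z}$ as the homogeneous norm, or insert the standard low-frequency modification — this is a cosmetic point.)

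Next I would handle the radial characterization. For $f$ in the $k$th harmonic subspace, the usual Littlewood–Paley projector is $P_jf=\mathcal H_{\mu(k)}\beta_j\mathcal H_{\mu(k)}f$, and by the Hankel–Plancherel isometry (Lemma \ref{Hankel}(iii)) together with Lemma \ref{Hankel}(iv) one has $\|f\|_{\dot H^s}^2\sim\sum_{j}2^{2js}\|P_jf\|_{L^2}^2\sim\sum_j 2^{2js}\|\beta_j\,\mathcal H_{\mu(k)}f\|_{L^2}^2$, where $\mathcal H_{\mu(k)}f$ in polar coordinates is $c_{k,\ell}(\rho)Y_{k,\ell}(\omega)$ with $c_{k,\ell}=\mathcal H_{\mu(k)}a_{k,\ell}$, and $\|\cdot\|_{L^2(\R^n)}$ of such a function equals $\|c_{k,\ell}\rho^{(n-1)/2}\|_{L^2_\rho}$ up to the fixed constant $\|Y_{k,\ell}\|_{L^2_\omega}=1$. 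So the left side of \eqref{2.18} is exactly this, except that it uses the transform $\mathcal H_{\nu(k)}$ of order $\nu(k)=\sqrt{\mu(k)^2+a}$ rather than $\mathcal H_{\mu(k)}$: the point of \eqref{2.18} is precisely that one may replace $P_j$ by $\tilde P_j=\mathcal H_{\nu(k)}\beta_j\mathcal H_{\nu(k)}$ in the Littlewood–Paley square function for $\dot H^s$ when $s$ is below the critical threshold. This is where Lemma \ref{orthogonality} enters.

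To carry out that replacement, write $M_{jj'}=P_j\tilde P_{j'}$ and $N_{jj'}=\tilde P_jP_{j'}$ and expand $P_jf=\sum_{j'}P_j\tilde P_{j'}f=\sum_{j'}M_{jj'}\tilde P_{j'}f$ (using $\sum_{j'}\tilde P_{j'}=\mathrm{Id}$ on the $k$th subspace up to the harmless low-frequency piece). Then
\begin{equation*}
\Big(\sum_j 2^{2js}\|P_jf\|_{L^2}^2\Big)^{1/2}\le \sum_{m\in\Z}\Big(\sum_j 2^{2js}\|M_{j,j+m}\tilde P_{j+m}f\|_{L^2}^2\Big)^{1/2}\lesssim \sum_m 2^{-\epsilon|m|}2^{|m||s|}\Big(\sum_{j'}2^{2j's}\|\tilde P_{j'}f\|_{L^2}^2\Big)^{1/2},
\end{equation*}
where Lemma \ref{orthogonality} gives $\|M_{j,j+m}g\|_{L^2}\lesssim 2^{-\epsilon|m|}\|g\|_{L^2}$ and $2^{2js}=2^{2(j+m)s}2^{-2ms}$ absorbs the frequency shift. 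The geometric sum over $m$ converges precisely when $s<\epsilon<1+\min\{\tfrac{n-2}2,(\tfrac{(n-2)^2}4+a)^{1/2}\}$, which is the stated range; the reverse inequality follows symmetrically by expanding $\tilde P_{j'}f=\sum_j N_{j',j}P_jf$ and using the $N$-bound. Finally, $\sum_{j'}2^{2j's}\|\tilde P_{j'}f\|_{L^2}^2$ reindexed by $M=2^{j'}$ and written via $b_{k,\ell}=\mathcal H_{\nu(k)}a_{k,\ell}$ and $\beta_{j'}(\rho)\sim\chi(\rho/M)$ is exactly the left-hand side of \eqref{2.18}, and summing over $k,\ell$ gives the full norm. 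The main obstacle is the bookkeeping in the vector-valued Cauchy–Schwarz/Minkowski step above — making sure the sum over the harmonic index $k$ does not interfere (it does not, since the constant in Lemma \ref{orthogonality} is uniform in $k$, $j$, $j'$) and that the low-frequency endpoint is treated consistently; everything else is a direct consequence of the isometry property and the almost-orthogonality bound.
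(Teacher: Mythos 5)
Your proposal follows essentially the same route as the paper's proof: reduce to $s'=0$ via \eqref{a2.2} and orthogonality, use the Hankel isometry to identify the left side of \eqref{2.18} with the square function $\sum_j 2^{2js}\|\tilde{P}_j(a_{k,\ell}Y_{k,\ell})\|_{L^2}^2$, transfer between the $\tilde{P}_j$ and $P_j$ square functions by the almost-orthogonality Lemma \ref{orthogonality} with a geometric sum converging exactly for $s<\epsilon$, and conclude with Plancherel and \eqref{2.16}. The only differences are cosmetic (Minkowski over the shift $m=j'-j$ versus the paper's Cauchy--Schwarz splitting $\epsilon=\epsilon_1+\epsilon_2$, and which direction of the equivalence is written out explicitly, with fattened projectors handling the non-idempotence in either version), so the argument is correct.
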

\begin{proof}
Note that $-\Delta_\theta Y_{k,\ell}=k(k+n-2)Y_{k,\ell}$, then we have by
Lemma \ref{Hankel}
\begin{equation*}
\begin{split}
\|f\|_{\dot{H}^0_r{H}^{s'}_{\theta}}^2&\sim\sum_{k=0}^\infty\sum_{\ell=1}^{d(k)}
(1+k)^{2s'}\|a_{k,\ell}(r)\|_{L^{2}_{r^{n-1}\mathrm{d}r}(\R^+)}^2\|Y_{k,\ell}(\theta)\|_{L_\theta^2}^2\\&\sim\sum_{k=0}^\infty\sum_{\ell=1}^{d(k)}
(1+k)^{2s'}\|b_{k,\ell}(\rho)\|_{L^{2}_{\rho^{n-1}\mathrm{d}\rho}(\R^+)}^2.
\end{split}
\end{equation*}
By \eqref{a2.2}, it suffices to show \eqref{2.18} with $s'=0$. By
Lemma \ref{Hankel}, we have
\begin{align*}
\|b_{k,\ell}(\rho)\chi(\frac{\rho}M)\rho^{\frac{n-1}{2}}\|_{L_\rho^2}
=&\big\|\chi(\frac{\rho}M)\mathcal{H}_{\nu}\big[Y_{k,l}(\theta)a_{k,\ell}(r)\big](\xi)\big\|_{L_\xi^2}\\
=&\Big\|\mathcal{H}_{\nu}\Big[\chi(\frac{\rho}M)\mathcal{H}_{\nu}\big(Y_{k,l}(\theta)a_{k,\ell}(r)\big)(\xi)\Big]\Big\|_{L_x^2}.
\end{align*}
This yields that by letting let $j=\log_2 M$
\begin{align*}
\big\|b_{k,\ell}(\rho)\chi(\frac{\rho}M)\rho^{\frac{n-1}{2}}\big\|_{L_\rho^2}
=&\big\|\big[\mathcal{H}_{\nu}\chi(\frac{\rho}M)\mathcal{H}_{\nu}\big]\big(Y_{k,l}(\theta)a_{k,\ell}(r)\big)\big\|_{L_x^2}\\
=&\big\|\tilde{P}_j\big(Y_{k,l}(\theta)a_{k,\ell}(r)\big)\big\|_{L_x^2}.
\end{align*}
Let $g_{k,\ell}(x)=Y_{k,l}(\theta)a_{k,\ell}(r)$ and
$\overline{P_{j'}}=P_{j'-1}+P_{j'}+P_{j'+1}$. We have by
the triangle inequality and Lemma \ref{orthogonality}
\begin{align*}
\text{L.H.S
of}~\eqref{2.18}&=\sum_{k=0}^{\infty}\sum_{\ell=1}^{d(k)}\sum_{j\in
\Z}2^{2sj}\big\|\tilde{P}_jg_{k,\ell}\big\|^2_{L_x^2}\\&\lesssim\sum_{k=0}^{\infty}\sum_{\ell=1}^{d(k)}\sum_{j\in
\Z}2^{2sj}\big(\sum_{j'}\big\|\tilde{P}_j\overline{P_{j'}}P_{j'}g_{k,\ell}\big\|_{L_x^2}\big)^{2}\\
&\lesssim\sum_{k=0}^{\infty}\sum_{\ell=1}^{d(k)}\sum_{j\in
\Z}2^{2sj}\big(\sum_{j'}2^{-\epsilon|j-j'|}\big\|P_{j'}g_{k,\ell}\big\|_{L_x^2}\big)^{2},
\end{align*}
where $s<\epsilon<1+\min\{\frac{n-2}2,
(\frac{(n-2)^2}4+a)^{\frac12}\}$. Let $0<\epsilon_1\ll1$ such that
$\epsilon_2:=\epsilon-\epsilon_1>s$, then
\begin{equation*}
\begin{split}
\text{L.H.S of}~\eqref{2.18}&\leq
C\sum_{k=0}^{\infty}\sum_{\ell=1}^{d(k)}\sum_{j\in\Z}2^{2js}\sum_{j'}2^{-2\epsilon_2|j-j'|}\|P_{j'}g_{k,\ell}\|^2_{L^2(\R^n)}\sum_{j'}2^{-2\epsilon_1|j-j'|}
\\&\leq
C\sum_{k=0}^{\infty}\sum_{\ell=1}^{d(k)}\sum_{j'}2^{2j's}\sum_{j\in\Z}2^{2js}2^{-2\epsilon_2|j|}\|P_{j'}g_{k,\ell}\|^2_{L^2(\R^n)}\\&\leq
C\sum_{k=0}^{\infty}\sum_{\ell=1}^{d(k)}\sum_{j'}2^{2j's}\|P_{j'}g_{k,\ell}\|^2_{L^2(\R^n)}.
\end{split}
\end{equation*}
By the definition of $P_{j'}$, Lemma \ref{Hankel} and \eqref{2.16},
we have
\begin{equation*}
\begin{split}
\text{L.H.S of}~\eqref{2.18}&\leq
C\sum_{j'}2^{2j's}\sum_{k=0}^{\infty}\sum_{\ell=1}^{d(k)}\big\|\chi(\frac{\rho}{2^{j'}})
\big[\mathcal{H}_{\mu(k)}a_{k,\ell}\big](\rho)\rho^{\frac{n-1}{2}}\big\|^2_{L^2(\R^+)}\\&=
C\sum_{j'}2^{2j's}\big\|\chi(\frac{\rho}{2^{j'}})\sum_{k=0}^{\infty}\sum_{\ell=1}^{d(k)}2\pi
i^{k}
\big[\mathcal{H}_{\mu(k)}a_{k,\ell}\big](\rho)Y_{k,\ell}(\omega)\big\|^2_{L^2(\R^n)}
\\&=
C\sum_{j'}2^{2j's}\big\|\chi(\frac{\rho}{2^{j'}})\hat
f\big\|^2_{L^2(\R^n)}\sim \|f\|^2_{\dot H^s}.
\end{split}
\end{equation*}
We can use the similar argument to prove
\begin{equation*}
\begin{split}
\text{L.H.S of}~\eqref{2.18}&\geq c \|f\|^2_{\dot H^s}.
\end{split}
\end{equation*} Therefore we conclude the proof of Lemma \ref{orthogonality}.
\end{proof}

\section{Proof of the Main Theorems}
In this section, we first use the spherical harmonic expansion to
write the solution as a linear combination of products of the Hankel
transform of radial functions and spherical harmonics. We prove the
main theorems by analyzing the property of the Hankel transform. The
key ingredients are to use the stationary phase argument and to exploit
the asymptotics behavior of the Bessel function.
\subsection{The expression of the solution.}
Consider the following Cauchy problem:
\begin{equation}\label{3.1}
\begin{cases}
i\partial_{t}u-\Delta u+\frac{a}{|x|^2}u=0,\\
u(x,0)=u_0(x).
\end{cases}
\end{equation}
We use the spherical harmonic expansion to write
\begin{equation}\label{3.2}
u_0(x)=\sum_{k=0}^{\infty}\sum_{\ell=1}^{d(k)}a^0_{k,\ell}(r)Y_{k,\ell}(\theta).
\end{equation}
Let us consider the equation \eqref{3.1} in polar coordinates. Write
$v(t,r,\theta)=u(t,r\theta)$ and $g(r,\theta)=u_0(r\theta)$. Then
$v(t,r,\theta)$ satisfies that
\begin{equation}\label{3.3}
\begin{cases}
i\partial_{t}
v-\partial_{rr}v-\frac{n-1}r \partial_r v-\frac1{r^2}\Delta_{\theta}v+\frac{a}{r^2}v=0\\
v(0,r,\theta)=g(r,\theta).
\end{cases}
\end{equation}
By \eqref{3.2}, we have
\begin{equation*}
g(r,\theta)=\sum_{k=0}^{\infty}\sum_{\ell=1}^{d(k)}a^0_{k,\ell}(r)Y_{k,\ell}(\theta).
\end{equation*}
Using separation of variables, we can write $v$ as a linear
combination of products of radial functions and spherical harmonics
\begin{equation}\label{3.4}
v(t,r,\theta)=\sum_{k=0}^{\infty}\sum_{\ell=1}^{d(k)}v_{k,\ell}(t,r)Y_{k,\ell}(\theta),
\end{equation}
where $v_{k,\ell}$ is given by
\begin{equation*}
\begin{cases}
i\partial_{t}
v_{k,\ell}-\partial_{rr}v_{k,\ell}-\frac{n-1}r\partial_rv_{k,\ell}+\frac{k(k+n-2)+a}{r^2}v_{k,\ell}=0, \\
v_{k,\ell}(0,r)=a^0_{k,\ell}(r)
\end{cases}
\end{equation*}
for each $k,\ell\in \N,~1\leq\ell\leq d(k)$. Then  it reduces to consider by the definition of $A_{\nu(k)}$
\begin{equation}\label{3.5}
\begin{cases}
i\partial_{t}
v_{k,\ell}+A_{\nu(k)}v_{k,\ell}=0, \\
v_{k,\ell}(0,r)=a^0_{k,\ell}(r).
\end{cases}
\end{equation}
Applying the Hankel transform to the equation \eqref{3.5}, by
$(\rm{iv})$ in Lemma \ref{Hankel}, we have
\begin{equation}\label{3.6}
\begin{cases}
i\partial_{t}
\tilde{ v}_{k,\ell}+\rho^2\tilde{v}_{k,\ell}=0 \\
\tilde{v}_{k,\ell}(0,\rho)=b^0_{k,\ell}(\rho),
\end{cases}
\end{equation}
where
\begin{equation}\label{3.7}
\tilde{v}_{k,\ell}(t,\rho)=(\mathcal{H}_{\nu}
v_{k,\ell})(t,\rho),\quad
b^0_{k,\ell}(\rho)=(\mathcal{H}_{\nu}a^0_{k,\ell})(\rho).
\end{equation}
Solving this ODE and inverting the Hankel transform, we obtain
\begin{equation*}
\begin{split}
v_{k,\ell}(t,r)&=\int_0^\infty(r\rho)^{-\frac{n-2}2}J_{\nu(k)}(r\rho)\tilde{v}_{k,\ell}(t,\rho)\rho^{n-1}\mathrm{d}\rho\\
&=\int_0^\infty(r\rho)^{-\frac{n-2}2}J_{\nu(k)}(r\rho)e^{
it\rho^2}b^0_{k,\ell}(\rho)\rho^{n-1}\mathrm{d}\rho.
\end{split}
\end{equation*}
Therefore we get
\begin{equation}\label{3.8}
\begin{split} &u(x,t)=v(t,r,\theta)
\\&=\sum_{k=0}^{\infty}\sum_{\ell=1}^{d(k)}Y_{k,\ell}(\theta)\int_0^\infty(r\rho)^{-\frac{n-2}2}J_{\nu(k)}(r\rho)e^{
it\rho^2}b^0_{k,\ell}(\rho)\rho^{n-1}\mathrm{d}\rho
\\&=\sum_{k=0}^{\infty}\sum_{\ell=1}^{d(k)}Y_{k,\ell}(\theta)\mathcal{H}_{\nu(k)}\big[e^{
it\rho^2}b^0_{k,\ell}(\rho)\big](r).
\end{split}
\end{equation}\vspace{0.2cm}

\subsection{Proof of the Theorem \ref{thm1}.}

In this subsection, we prove the Theorem \ref{thm1}. By the Sobolev
embedding $\dot H^{\frac12-}(\R)\cap \dot
H^{\frac12+}(\R)\hookrightarrow L^{\infty}(\R)$, it suffices to show
\begin{proposition}\label{pro1} Let $\alpha\geq \frac12-\frac\beta4$ and $\beta=1+$ such that $$2\alpha-1+\frac\beta2<1+\min\big\{{(n-2)}/2,
({(n-2)^2}/4+a)^{\frac12}\big\},$$ then there exists a constant $C$
independent of $u_0$ such that
\begin{equation}\label{3.9}
\begin{split}
\int_{\R^n}\int_{\R}|\partial_t^\alpha
u(x,t)|^2\frac{\mathrm{d}t\mathrm{d}x}{(1+|x|)^\beta}\leq
C\|u_0\|^2_{\dot H^{2\alpha-1+\frac\beta2}(\R^n)}.
\end{split}
\end{equation}
\end{proposition}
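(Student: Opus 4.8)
The plan is to reduce the mixed space--time estimate \eqref{3.9} to a one--dimensional oscillatory integral estimate, uniformly over the harmonic degree $k$, and then to reassemble using the Hankel--side characterization of the Sobolev norm from Lemma \ref{sobolev} together with \eqref{2.2}. Using the expansion \eqref{3.8}, orthogonality of the $Y_{k,\ell}$, and that $\partial_t^\alpha$ brings down $\rho^{2\alpha}$ (interpreting the fractional derivative on the Fourier side in $t$), the left side of \eqref{3.9} becomes
\begin{equation*}
\sum_{k=0}^{\infty}\sum_{\ell=1}^{d(k)}\int_0^\infty\int_{\R}\Big|\int_0^\infty (r\rho)^{-\frac{n-2}2}J_{\nu(k)}(r\rho)\,e^{it\rho^2}\rho^{2\alpha}b^0_{k,\ell}(\rho)\,\rho^{n-1}\,\mathrm{d}\rho\Big|^2\,\mathrm{d}t\,\frac{r^{n-1}\,\mathrm{d}r}{(1+r)^\beta}.
\end{equation*}
So it suffices to prove, uniformly in $k$ (equivalently in $\nu=\nu(k)\geq \frac{n-2}2$), the scalar estimate
\begin{equation*}
\int_0^\infty\int_{\R}\Big|\int_0^\infty (r\rho)^{-\frac{n-2}2}J_{\nu}(r\rho)\,e^{it\rho^2}\,h(\rho)\,\rho^{n-1}\,\mathrm{d}\rho\Big|^2\,\mathrm{d}t\,\frac{r^{n-1}\,\mathrm{d}r}{(1+r)^\beta}\leq C\int_0^\infty |h(\rho)|^2\rho^{2(2\alpha-1+\frac\beta2)}\,\rho^{n-1}\,\mathrm{d}\rho,
\end{equation*}
with $h(\rho)=\rho^{2\alpha}b^0_{k,\ell}(\rho)$ reabsorbed; then summing in $k,\ell$ and invoking Lemma \ref{sobolev} (applicable since $2\alpha-1+\frac\beta2<1+\min\{\frac{n-2}2,(\frac{(n-2)^2}4+a)^{1/2}\}$ by hypothesis) recovers $\|u_0\|_{\dot H^{2\alpha-1+\beta/2}}^2$. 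The weight $(1+r)^{-\beta}$ with $\beta>1$ is exactly what makes $\int_0^\infty \frac{r^{n-1}\,\mathrm{d}r}{(1+r)^\beta}$ over dyadic shells summable after we control each shell.

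The core one--dimensional estimate I would treat by a Plancherel/$TT^*$ argument in the $t$ variable, exactly in the spirit of Vega \cite{Vega}. Freezing $r$, change variables $\rho\mapsto\rho^2=\tau$ so that $e^{it\rho^2}$ becomes a genuine Fourier kernel in $t$; Plancherel in $t$ then kills the $t$--integral and leaves
\begin{equation*}
\int_0^\infty \Big| (r\tau^{1/2})^{-\frac{n-2}2}J_{\nu}(r\tau^{1/2})\, h(\tau^{1/2})\,\tau^{\frac{n-2}2}\Big|^2\,\mathrm{d}\tau
\end{equation*}
up to constants, i.e. after undoing the substitution,
\begin{equation*}
\int_0^\infty \big|(r\rho)^{-\frac{n-2}2}J_\nu(r\rho)\big|^2 |h(\rho)|^2\,\rho^{n-1}\,\mathrm{d}\rho\cdot(\text{Jacobian factor }\rho).
\end{equation*}
Then I would integrate in $r$ against $\frac{r^{n-1}\,\mathrm{d}r}{(1+r)^\beta}$, split the $r$--integral into $r\lesssim 1$ and dyadic regions $r\sim R\gg1$, and in each region bound $\int |(r\rho)^{-\frac{n-2}2}J_\nu(r\rho)|^2 r^{n-1}(1+r)^{-\beta}\,\mathrm{d}r$ by a constant times $\rho^{-(2\alpha-1+\beta/2)\cdot 2}$ uniformly in $\nu$. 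For $r\lesssim 1$ the small--argument bound \eqref{2.4} handles it; for $r\sim R$ I would substitute $s=r\rho$ and use precisely Lemma \ref{Bessel2}, $\int_R^{2R}|J_\nu(s)|^2\,\mathrm{d}s\leq C$ uniformly in $\nu$ and $R$, which is the whole point of having proved that lemma. Tracking powers of $\rho$ and $R$ and summing the geometric series in $R$ (convergent because $\beta>1$) yields the claimed gain of $\rho^{-(2\alpha-1+\beta/2)}$ in $L^2(\rho^{n-1}\,\mathrm{d}\rho)$; the constraint $\alpha\geq\frac12-\frac\beta4$ is what keeps this exponent $\geq 0$ so no low--frequency divergence appears.

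The main obstacle is getting the $r$--integration to produce exactly the homogeneity $\rho^{-(2\alpha-1+\beta/2)}$ \emph{uniformly in $k$}, since $\nu(k)\to\infty$ and the naive uniform bound $|J_\nu(s)|\lesssim s^{-1/3}$ is, as the authors themselves note after \eqref{2.5}, too weak. The resolution is to never use pointwise asymptotics of $J_\nu$ but only the $L^2$--averaged bound of Lemma \ref{Bessel2}, which is uniform in $\nu$; this forces the Plancherel-in-$t$ route above rather than a pointwise stationary--phase estimate on the $\rho$--integral. A secondary technical point is the justification of the formal $\partial_t^\alpha\leftrightarrow\rho^{2\alpha}$ identity and the Fubini interchanges, which is routine for Schwartz $u_0$ and then extended by density; and one must be a little careful near $\rho=0$ where the change of variables $\tau=\rho^2$ is singular, but the weight $\rho^{2(2\alpha-1+\beta/2)}$ with nonnegative exponent absorbs this. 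Once the scalar estimate is in hand, the passage back to $\R^n$ via Lemma \ref{sobolev} and the reduction (stated before Proposition \ref{pro1}) through the Sobolev embedding $\dot H^{1/2-}\cap\dot H^{1/2+}(\R)\hookrightarrow L^\infty(\R)$ in the time variable complete the proof of Theorem \ref{thm1}.
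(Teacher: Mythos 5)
Your proposal follows essentially the same route as the paper: Plancherel in $t$ (the paper writes it via the delta function $\delta(\tau-\rho^2)$) combined with orthogonality of the spherical harmonics, a dyadic decomposition in $\rho\sim M$ and $r\sim R$, the small-argument bound \eqref{2.4} for $r\lesssim 1$ and the $\nu$-uniform averaged bound of Lemma \ref{Bessel2} for $r\gg1$, then summation of the dyadic pieces using $\beta=1+$ and $\alpha\geq\frac12-\frac\beta4$, and finally Lemma \ref{sobolev} to recover $\|u_0\|_{\dot H^{2\alpha-1+\beta/2}}$. The only thing to tidy is the intermediate bookkeeping of powers of $\rho$ after undoing the substitution $\tau=\rho^2$ (the correct integrand carries $\rho^{2(n-2)}\cdot\rho$ rather than $\rho^{n-1}\cdot\rho$, as in the paper's display defining $G_{k,\ell}(R,M)$), which does not affect the argument's structure.
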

\begin{proof}By the Plancherel theorem with respect to time $t$, we obtain
\begin{equation*}
\begin{split}
&\int_{\R^n}\int_{\R}|\partial_t^\alpha
u(x,t)|^2\frac{\mathrm{d}t\mathrm{d}x}{(1+|x|)^\beta}=\int_{\R^n}\int_{\R}\big|\tau^\alpha\int_{\R}
e^{-it\tau}
u(x,t)\mathrm{d}t\big|^2\frac{\mathrm{d}\tau\mathrm{d}x}{(1+|x|)^\beta}.
\end{split}
\end{equation*}
Using \eqref{3.8}, we further have
\begin{equation*}
\begin{split}
&\text{L.H.S of}~\eqref{3.9}\\ \lesssim &
\int_{\R^{n+1}}\big|\tau^\alpha\sum_{k=0}^{\infty}\sum_{\ell=1}^{d(k)}Y_{k,\ell}(\theta)\int_{\R}
\int_0^\infty(r\rho)^{-\frac{n-2}2}J_{\nu(k)}(r\rho)e^{
it(\rho^2-\tau)}b^0_{k,\ell}(\rho)\rho^{n-1}\mathrm{d}\rho\mathrm{d}t\big|^2\frac{\mathrm{d}\tau\mathrm{d}x}{(1+|x|)^\beta}\\
\lesssim & \int_{\R^{n+1}}\big|\tau^\alpha
\sum_{k=0}^{\infty}\sum_{\ell=1}^{d(k)}Y_{k,\ell}(\theta)\int_0^\infty(r\rho)^{-\frac{n-2}2}J_{\nu(k)}(r\rho)b^0_{k,\ell}(\rho)
\rho^{n-1}\delta(\tau-\rho^2)\mathrm{d}\rho\big|^2\frac{\mathrm{d}\tau\mathrm{d}x}{(1+|x|)^\beta}\\
\lesssim&\int_{\R^n}\int_{0}^\infty\big|\sum_{k=0}^{\infty}\sum_{\ell=1}^{d(k)}Y_{k,\ell}(\theta)\rho^\alpha
(r\sqrt{\rho})^{-\frac{n-2}2}J_{\nu(k)}(r\sqrt{\rho})b^0_{k,\ell}(\sqrt{\rho})\rho^{\frac{n-1}2}\rho^{-\frac12}\big|^2\frac{\mathrm{d}\rho\mathrm{d}x}{(1+|x|)^\beta}.
\end{split}
\end{equation*}
By the orthogonality, we see that
\begin{equation}
\begin{split}
&\text{L.H.S
of}~\eqref{3.9}\\&\lesssim\sum_{k=0}^{\infty}\sum_{\ell=1}^{d(k)}\int_{0}^\infty\int_{0}^\infty\big|\rho^{2\alpha+\frac12}
(r\rho)^{-\frac{n-2}2}J_{\nu(k)}(r\rho)b^0_{k,\ell}(\rho)\rho^{n-2}\big|^2\frac{
\mathrm{d}\rho ~r^{n-1}\mathrm{d}r}{(1+r)^\beta}.
\end{split}
\end{equation}
Let $\chi$ be a smoothing function supported in $[1,2]$. For our
purpose, we make a dyadic decomposition to obtain
\begin{equation*}
\begin{split}
&\text{L.H.S of}~\eqref{3.9}\\
\lesssim&\sum_{k=0}^{\infty}\sum_{\ell=1}^{d(k)}\sum_{M\in2^{\Z}}\int_{0}^\infty\int_{0}^\infty\big|\rho^{2\alpha+\frac12}
(r\rho)^{-\frac{n-2}2}J_{\nu(k)}(r\rho)b^0_{k,\ell}(\rho)\rho^{n-2}\chi(\frac\rho
M)\big|^2\frac{ r^{n-1}\mathrm{d}r\mathrm{d}\rho}{(1+r)^\beta}
\\ \lesssim&\sum_{k=0}^{\infty}\sum_{\ell=1}^{d(k)}\sum_{M\in2^{\Z}}M^{2(n-2+2\alpha+\frac12)+1-n}\int_{0}^\infty\int_{0}^\infty\big|
(r\rho)^{-\frac{n-2}2}J_{\nu(k)}(r\rho)b^0_{k,\ell}(M\rho)\chi(\rho)\big|^2\frac{
r^{n-1}\mathrm{d}r\mathrm{d}\rho}{(1+\frac r M)^\beta}\\
\lesssim&\sum_{k=0}^{\infty}\sum_{\ell=1}^{d(k)}\sum_{M\in2^{\Z}}\sum_{R\in2^{\Z}}M^{n-2+4\alpha}R^{n-1}\int_{R}^{2R}\int_{0}^\infty\big|
(r\rho)^{-\frac{n-2}2}J_{\nu(k)}(r\rho)b^0_{k,\ell}(M\rho)\chi(\rho)\big|^2\frac{
\mathrm{d}r\mathrm{d}\rho}{(1+\frac r M)^\beta}.
\end{split}
\end{equation*}
Define
\begin{equation}\label{3.11}
\begin{split}
G_{k,\ell}(R,M)=\int_{R}^{2R}\int_{0}^\infty\big|
(r\rho)^{-\frac{n-2}2}J_{\nu(k)}(r\rho)b^0_{k,\ell}(M\rho)\chi(\rho)\big|^2\frac{
\mathrm{d}r\mathrm{d}\rho}{(1+\frac r M)^\beta}.
\end{split}
\end{equation}
\begin{proposition}\label{pro2} We have the following inequality
\begin{equation}\label{3.12}
G_{k,\ell}(R,M) \lesssim
\begin{cases}
R^{2\nu(k)-n+3}M^{-n}\min\Big\{1,\Big(\frac{M}{R}\Big)^\beta\Big\}\|b^0_{k,\ell}(\rho)\chi(\frac{\rho}M)\rho^{\frac{n-1}2}\|^2_{L^2},~
R\lesssim 1;\\
\min\Big\{1,\Big(\frac{M}{R}\Big)^\beta\Big\}R^{-(n-2)}M^{-n}\|b^0_{k,\ell}(\rho)\chi(\frac{\rho}M)\rho^{\frac{n-1}2}\|^2_{L^2},~
R\gg1.
\end{cases}
\end{equation}
\end{proposition}
\begin{proof} To prove \eqref{3.12}, we break it into two
cases.\vspace{0.2cm}

$\bullet$ Case 1: $R\lesssim1$. Since $\rho\sim1$, we have
$r\rho\lesssim1$. By the property of the Bessel function
\eqref{2.4}, we obtain
\begin{equation}\label{3.13}
\begin{split}
G_{k,\ell}(R,M)&\lesssim\int_{R}^{2R}\int_{0}^\infty\Big| \frac{
(r\rho)^{\nu(k)}(r\rho)^{-\frac{n-2}2}}{2^{\nu(k)}\Gamma(\nu(k)+\frac12)\Gamma(\frac12)}b^0_{k,\ell}(M\rho)\chi(\rho)\Big|^2\mathrm{d}\rho\frac{
\mathrm{d}r}{(1+\frac r M)^\beta}\\& \lesssim
R^{2\nu(k)-n+3}M^{-n}\min\Big\{1,\Big(\frac{M}{R}\Big)^\beta\Big\}\|b^0_{k,\ell}(\rho)\chi(\frac{\rho}M)\rho^{\frac{n-1}2}\|^2_{L^2}.
\end{split}
\end{equation}

$\bullet$ Case 2: $R\gg1$. Since $\rho\sim1$, we have $r\rho\gg 1$. We
estimate
\begin{equation}\label{3.14}
\begin{split}
G_{k,\ell}(R,M)&\lesssim
R^{-(n-2)}\int_{0}^\infty\big|b^0_{k,\ell}(M\rho)\chi(\rho)\big|^2\int_{R}^{2R}\big|J_{\nu(k)}(
r\rho)\big|^2 \frac{ \mathrm{d}r}{(1+\frac r
M)^\beta}\mathrm{d}\rho.
\end{split}
\end{equation}
$(i)$ Subcase: $R\lesssim M$. Noting that $\rho\sim1$, we obtain by
Lemma \ref{Bessel2}
\begin{equation}\label{3.15}
\begin{split}
\int_{R}^{2R}\big|J_{\nu(k)}( r\rho)\big|^2 \frac{
\mathrm{d}r}{(1+\frac r M)^\beta}\lesssim
\int_{R}^{2R}\big|J_{\nu(k)}( r\rho)\big|^2 \mathrm{d}r\lesssim 1.
\end{split}
\end{equation}
$(ii)$ Subcase: $R\gg M$. Noticing that $\rho\sim1$ again, we obtain
by Lemma \ref{Bessel2}
\begin{equation}\label{3.16}
\begin{split}
\int_{R}^{2R}\big|J_{\nu(k)}( r\rho)\big|^2 \frac{
\mathrm{d}r}{(1+\frac r M)^\beta}\lesssim
\Big(\frac{M}{R}\Big)^\beta \int_{R}^{2R}\big|J_{\nu(k)}(
r\rho)\big|^2 \mathrm{d}r\lesssim \Big(\frac{M}{R}\Big)^\beta.
\end{split}
\end{equation}
Putting \eqref{3.15} and \eqref{3.16} into \eqref{3.14}, we have
\begin{equation*}
\begin{split}
G_{k,\ell}(R,M)&\lesssim
\min\Big\{1,\Big(\frac{M}{R}\Big)^\beta\Big\}R^{-(n-2)}\int_{0}^\infty\big|b^0_{k,\ell}(M\rho)\chi(\rho)\big|^2\mathrm{d}\rho\\&
\lesssim
\min\Big\{1,\Big(\frac{M}{R}\Big)^\beta\Big\}R^{-(n-2)}M^{-n}\|b^0_{k,\ell}(\rho)\chi(\frac{\rho}M)\rho^{\frac{n-1}2}\|^2_{L^2}.
\end{split}
\end{equation*}
Thus we prove \eqref{3.12}.
\end{proof}
Now we return to prove Proposition \ref{pro1}. By Proposition \ref{pro2},
we show
\begin{equation*}
\begin{split}
&\int_{\R^n}\int_{\R}|\partial_t^\alpha
u(x,t)|^2\frac{\mathrm{d}t\mathrm{d}x}{(1+|x|)^\beta}\\&\lesssim\sum_{k=0}^{\infty}\sum_{\ell=1}^{d(k)}\sum_{M\in2^{\Z}}\sum_{\{R\in2^{\Z}:
R\lesssim1\}}M^{4\alpha-2}R^{2(\nu(k)+1)}\min\Big\{1,\Big(\frac{M}{R}\Big)^\beta\Big\}\|b^0_{k,\ell}(\rho)\chi(\frac{\rho}M)\rho^{\frac{n-1}2}\|^2_{L^2}
\\&+\sum_{k=0}^{\infty}\sum_{\ell=1}^{d(k)}\sum_{M\in2^{\Z}}\sum_{\{R\in2^{\Z}:
R\gg1\}}M^{4\alpha-2+\beta}
R^{1-\beta}\|b^0_{k,\ell}(\rho)\chi(\frac{\rho}M)\rho^{\frac{n-1}2}\|^2_{L^2}.
\end{split}
\end{equation*}
From $\beta=1+$, one has
\begin{equation*}
\begin{split}
&\sum_{M\in2^{\Z}}\sum_{\{R\in2^{\Z}:
R\lesssim1\}}M^{4\alpha-2}R^{2(\nu(k)+1)}\min\Big\{1,\Big(\frac{M}{R}\Big)^\beta\Big\}\|b^0_{k,\ell}(\rho)\chi(\frac{\rho}M)\rho^{\frac{n-1}2}\|^2_{L^2}\\&
\lesssim\sum_{M\in2^{\Z}}M^{4\alpha-2+\beta}\|b^0_{k,\ell}(\rho)\chi(\frac{\rho}M)\rho^{\frac{n-1}2}\|^2_{L^2}.
\end{split}
\end{equation*}
Since $\alpha\geq \frac12-\frac\beta4$, we have by Lemma
\ref{sobolev}
\begin{equation*}
\begin{split}
\int_{\R^n}\int_{\R}|\partial_t^\alpha
u(x,t)|^2\frac{\mathrm{d}t\mathrm{d}x}{(1+|x|)^\beta}&\lesssim\sum_{k=0}^{\infty}\sum_{\ell=1}^{d(k)}\sum_{M\in2^{\Z}}M^{4\alpha-2+\beta}
\|b^0_{k,\ell}(\rho)\chi(\frac{\rho}M)\rho^{\frac{n-1}2}\|^2_{L^2}\\&\leq
C\|u_0\|^2_{\dot H^{2\alpha-1+\frac\beta2}(\R^n)}.
\end{split}
\end{equation*}
\end{proof}
Finally, we apply Proposition \ref{pro1} with $\alpha=\frac12+$
and $\alpha=\frac12-$ to prove Theorem \ref{thm1}.\vspace{0.2cm}
\subsection{Proof of Theorem \ref{thm2}.} In this subsection, we
construct an example to show Theorem \ref{thm2}. The main idea is
the stationary phase argument. By \eqref{3.8}, we recall
\begin{equation}\label{3.17}
\begin{split} &u(x,t)=\sum_{k=0}^{\infty}\sum_{\ell=1}^{d(k)}Y_{k,\ell}(\theta)\int_0^\infty(r\rho)^{-\frac{n-2}2}J_{\nu(k)}(r\rho)e^{
it\rho^2}b^0_{k,\ell}(\rho)\rho^{n-1}\mathrm{d}\rho,
\end{split}
\end{equation}
where
\begin{equation*}b^0_{k,\ell}(\rho)=(\mathcal{H}_{\nu}a^0_{k,\ell})(\rho),\quad
u_0(x)=
u_0(r\theta)=\sum_{k=0}^{\infty}\sum_{\ell=1}^{d(k)}a^0_{k,\ell}(r)Y_{k,\ell}(\theta).
\end{equation*}
In particular we choose $u_0(x)$ to be a radial function such that
$(\mathcal{H}_{\nu(0)}u_0)(\xi)=\chi_{N}(|\xi|)$ where $\chi_N$ is a
smooth positive function supported in $J_N$ (to be chosen later) and
$N\gg1$. Then
\begin{equation}\label{3.18}
\begin{split} &u(x,t)=\int_0^\infty(r\rho)^{-\frac{n-2}2}J_{\nu(0)}(r\rho)e^{
it\rho^2}\chi_{N}(\rho)\rho^{n-1}\mathrm{d}\rho.
\end{split}
\end{equation}
Recalling the asymptotic expansion about the Bessel function
\begin{equation*}
J_\nu(r)=r^{-1/2}\sqrt{\frac2{\pi}}\cos(r-\frac{\nu\pi}2-\frac{\pi}4)+O_{\nu}(r^{-3/2}),\quad
\text{as}~ r\rightarrow\infty
\end{equation*}
with a constant depending on $\nu$ (see \cite{SW}), then we can
write
\begin{equation}\label{3.19}
\begin{split} u(x,t)&={C_{\nu}}\int_0^\infty(r\rho)^{-\frac{n-1}2}\big(e^{i(r\rho-\frac{\nu\pi}2-\frac{\pi}4)}-e^{-i(r\rho-\frac{\nu\pi}2-\frac{\pi}4)}\big)e^{
it\rho^2}\chi_{N}(\rho)\rho^{n-1}\mathrm{d}\rho\\&+C_{\nu}\int_0^\infty(r\rho)^{-\frac{n-2}2}O_{\nu}\big((r\rho)^{-\frac32}\big)e^{
it\rho^2}\chi_{N}(\rho)\rho^{n-1}\mathrm{d}\rho.
\end{split}
\end{equation}
Let us define
\begin{equation}\label{3.20}
I_1(r)=C_{\nu}e^{i(\frac{\nu\pi}2+\frac{\pi}4)}\int_0^\infty(r\rho)^{-\frac{n-1}2}e^{
i(-r\rho+t\rho^2)}\chi_{N}(\rho)\rho^{n-1}\mathrm{d}\rho,
\end{equation}
\begin{equation}\label{3.21}
I_2(r)=C_{\nu}e^{-i(\frac{\nu\pi}2+\frac{\pi}4)}\int_0^\infty(r\rho)^{-\frac{n-1}2}e^{
i(r\rho+t\rho^2)}\chi_{N}(\rho)\rho^{n-1}\mathrm{d}\rho,
\end{equation}
and
\begin{equation}\label{3.22}
I_3(r)=C_{\nu}\int_0^\infty(r\rho)^{-\frac{n-2}2}O_{\nu}\big((r\rho)^{-\frac32}\big)e^{
it\rho^2}\chi_{N}(\rho)\rho^{n-1}\mathrm{d}\rho.
\end{equation}
Let $\phi_r(\rho)=t\rho^2-r\rho$. The fundamental idea  is to choose
sets $J_N$ and $E\subset B^n$, in which $t(r)$ can be chosen, so
that $\partial_{\rho}\phi_r(\rho)= 2t(r)\rho-r $ almost vanishes for
all $\rho\in J_N$ and $r\in \{|x|:x\in E\}$. To this end, we choose
$$E=\{x: \frac1{100}\leq |x|\leq\frac18\}~~\text{and}~~
J_N=[N,N+2N^{\frac12}].$$ Choose $t(r)=\frac{r}{2(N+\sqrt{N})}$,
then $\partial_{\rho}\phi_r(N+N^{\frac12})= 0$. Thus
\begin{equation}\label{3.23}
I_1(r)=C_{\nu}e^{i(\frac{\nu\pi}2+\frac{\pi}4)}e^{
i\phi_r(N+\sqrt{N})}\int_0^\infty(r\rho)^{-\frac{n-1}2}e^{\frac{
ir[\rho-(N+\sqrt{N})]^2}{2(N+\sqrt{N})}}\chi_{N}(\rho)\rho^{n-1}\mathrm{d}\rho.
\end{equation}
Observe that
\begin{equation}\label{3.24}
|I_1(r)|\geq{c_{\nu}}\int_0^\infty(r\rho)^{-\frac{n-1}2}\cos{\big(\frac{
r[\rho-(N+\sqrt{N})]^2}{2(N+\sqrt{N})}\big)}\chi_{N}(\rho)\rho^{n-1}\mathrm{d}\rho.
\end{equation}
Moreover, there exists a small constant $c>0$ such that
\begin{equation*}
\cos{\big(\frac{ r[\rho-(N+\sqrt{N})]^2}{2(N+\sqrt{N})}\big)}\geq c,
\end{equation*}
since $|\frac{ r[\rho-(N+\sqrt{N})]^2}{2(N+\sqrt{N})}|\leq \frac
{\pi}4$ for all $\rho\in J_N$ with $N\gg1$ and $r\in
[\frac1{100},\frac18]$. Therefore,
\begin{equation}\label{3.25}
|I_1(r)|\geq c_\nu
r^{-\frac{n-1}2}\int_0^\infty\chi_{N}(\rho)\rho^{\frac{n-1}2}\mathrm{d}\rho
\geq c_\nu r^{-\frac{n-1}2}N^{\frac {n}2}.
\end{equation}
On the other hand, let $\varphi_{r}(\rho)=t\rho^2+r\rho$, $t=t(r)$
as before, then
$\partial_\rho\varphi_{r}(\rho)=2t(r)\rho+r\geq\frac1{200}$ when
$\rho\in J_N$ and $r\in[\frac1{100},\frac18]$. From the integral by
parts, we obtain
\begin{equation}\label{3.26}
|I_2(r)| \leq C_\nu r^{-\frac{n}2}N^{\frac {n-2}2}.
\end{equation}
Obviously, we have
\begin{equation}\label{3.27}
|I_3(r)| \leq C_\nu r^{-\frac{n}2}N^{\frac {n-2}2}.
\end{equation}
Combining \eqref{3.25}-\eqref{3.27}, we get for $N\gg1$ and
$r\in[\frac1{100},\frac18]$
\begin{equation}\label{3.28}
u^*(x)\geq c N^{\frac {n}2}.
\end{equation}
On the other hand, let $j_0=\log_2 N$, then we obtain by the definition
of $P_j$ and $\tilde{P}_j$
\begin{equation*}
\|u_0(x)\|^2_{H^s} =\sum_{j}2^{2js}\|P_j u_0\|^2_{
L^{2}}=\sum_{j}2^{2js}\|P_j \tilde{P}_{j_0}u_0\|^2_{ L^{2}}.
\end{equation*}
By Lemma \ref{orthogonality}, we choose
$s<\epsilon<1+\min\{\frac{n-2}2, (\frac{(n-2)^2}4+a)^{\frac12}\}$ to
obtain
\begin{equation}\label{3.29}
\begin{split}
\|u_0(x)\|^2_{H^s}& \leq
C\sum_{j}2^{2js-2\epsilon|j-j_0|}\|u_0\|^2_{
L^{2}}\\&=CN^{2s}\sum_{j}2^{2js-2\epsilon|j|}\|\chi_N\|^2_{
L^{2}}=N^{2s+n-\frac12}.
\end{split}
\end{equation}
Thus, by \eqref{1.5} and \eqref{3.28}, we must have $s\geq1/4$.
\subsection{Proof of the Theorem \ref{thm3}.} In this subsection, we
show Theorem \ref{thm3}. Even though there is a loss of the angular
regularity in Theorem \ref{thm3}, the result implies the sharp
result for the radial initial data. The key ingredient here is the
following lemma proved in \cite{GS}:
\begin{lemma}\label{lem1}Let $\tilde{J}_{\nu}(s)=s^{\frac12}J_{\nu}(s)$ with $s\geq0$, and let
\begin{equation}\label{4.9}
\begin{split}
T_{\nu}g(r)=\int_{I}\frac{e^{it(r)\rho^2}\tilde{J}_{\nu}(r\rho)}{\rho^{\frac14}}g(\rho)\mathrm{d}\rho.
\end{split}
\end{equation}
Then
\begin{equation}\label{4.9}
\begin{split} \int_{0}^1\big|T_{\nu}g(r)\big|^2\mathrm{d}r\leq
C\int_{I}|g(\rho)|^2\mathrm{d}\rho,
\end{split}
\end{equation}
where the constant $C$ is independent of $g\in L^2(I)$, of the
interval $I$, of the measurable function $t(r)$ and of the order
$\nu\geq0$.
\end{lemma}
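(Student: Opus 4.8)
The plan is to prove Lemma \ref{lem1} as a version of Carleson's one–dimensional maximal estimate in which the plane waves are replaced by the normalised Bessel functions $\tilde J_\nu$, keeping careful track of uniformity in the order $\nu$. First I would reduce to a bounded interval $I$: for unbounded $I$ the statement follows from the bounded case by writing $I=\bigcup_N\big(I\cap(0,N)\big)$ and using that the bound is uniform in the interval. For bounded $I$ the operator $T_\nu$ is given by an absolutely convergent integral, and I would run a $TT^*$ argument in the variable $r$: since $\tilde J_\nu$ is real-valued, $T_\nu T_\nu^*$ acts on $L^2(0,1)$ with kernel
\begin{equation*}
K(r,r')=\int_I e^{i(t(r)-t(r'))\rho^2}\,\tilde J_\nu(r\rho)\,\tilde J_\nu(r'\rho)\,\frac{d\rho}{\rho^{1/2}},
\end{equation*}
and $\|T_\nu\|_{L^2(I)\to L^2(0,1)}^2=\|T_\nu T_\nu^*\|_{L^2(0,1)\to L^2(0,1)}$. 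By the Schur test it then suffices to establish the pointwise bound
\begin{equation*}
|K(r,r')|\le C\big(|r-r'|^{-1/2}+(r+r')^{-1/2}\big),
\end{equation*}
with $C$ absolute, since the right-hand side is integrable in $r'$ over $(0,1)$ uniformly in $r$.

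To prove this kernel bound I would split the $\rho$-integral according to the position of $r\rho$ and $r'\rho$ relative to the turning point $\nu$, using Lemma \ref{Bessel}. In the oscillatory region, where $r\rho,r'\rho\gtrsim\nu$ (which is the whole range as soon as $\nu\lesssim1$, in particular for all $\nu\le1$), I would insert $\tilde J_\nu(s)=\sum_\pm a_\pm(s,\nu)e^{\pm is}+O(s^{-1/2})$ with $|a_\pm|\lesssim1$ and $|\partial_s a_\pm|\lesssim s^{-1}$ (a uniform refinement of \eqref{2.8}). Multiplying the two expansions, this part of $K$ becomes a finite sum of oscillatory integrals with phase $(t(r)-t(r'))\rho^2+(\pm r\pm r')\rho$ and slowly varying amplitude, plus honestly lower-order terms. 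For each such integral I would invoke the classical estimate
\begin{equation*}
\Big|\int_J e^{ia\rho^2+ib\rho}A(\rho)\,\frac{d\rho}{\rho^{1/2}}\Big|\le C|b|^{-1/2},
\end{equation*}
valid for every interval $J\subset(0,\infty)$, every $a\in\R$ and every amplitude with $|A|\le1$, $|A'|\le\rho^{-1}$ (this is the one-dimensional heart of Carleson's theorem; the quadratic term only helps and is dispatched by van der Corput's lemma of first or second order according to whether the linear or the quadratic part of the phase dominates). Taking $b=\pm r\pm r'$ gives the claimed bound for the oscillatory region. In the deep region, where $r\rho\ll\nu$ or $r'\rho\ll\nu$, the bound $|\tilde J_\nu(s)|\lesssim s^{1/2}e^{-c(\nu+s)}$ yields an exponential gain in $\nu$ that dominates any polynomial loss, so this region is harmless.

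The transition region—where $r\rho$ or $r'\rho$ lies within $O(\nu^{1/3})$ of $\nu$—is the crux and has no one-dimensional counterpart. There the only pointwise information from Lemma \ref{Bessel} is $|\tilde J_\nu(s)|\lesssim\nu^{1/6}$, which is too weak to use directly. Instead I would call upon the finer Airy-type uniform asymptotics of $J_\nu$ near its turning point: these show that on the illuminated side $s>\nu$ the Bessel function still oscillates, with phase derivative degenerating like $\sqrt{(s-\nu)/\nu}$, so that integrating by parts against this residual oscillation—treating the transition factor as an amplitude that is slowly varying on scale $\nu^{1/3}$—lets the powers of $\nu$ cancel against the $\rho^{-1/2}$ weight and the $O(\nu^{1/3})$-width of the window, again producing a Schur-admissible bound uniform in $\nu$. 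On the genuinely caustic core $|s-\nu|\lesssim\nu^{1/3}$, where all oscillation degenerates, I would fall back on the averaged bound $\int_R^{2R}|J_\nu|^2\lesssim1$ of Lemma \ref{Bessel2} together with Cauchy--Schwarz, the shortness of the core and its $\rho^{-1/2}$ weight once more absorbing the $\nu^{1/6}$ growth. Collecting the three regions yields the kernel estimate, and the Schur test then gives the lemma; the interval $I$ and the measurable function $t(\cdot)$ enter throughout only via manifestly uniform one-variable estimates.

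The main obstacle is precisely this transition analysis—controlling the $\nu^{1/6}$ growth of $s^{1/2}J_\nu(s)$ at the caustic \emph{uniformly in the order} $\nu$—which is what compels one to go past the pointwise bounds of Lemma \ref{Bessel} to the turning-point asymptotics; this is exactly the ``finer result'' of \cite{GS} referred to in the Remark after Theorem \ref{thm3}. Everything else is a Bessel-flavoured transcription of Carleson's classical one-dimensional maximal estimate.
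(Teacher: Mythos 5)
First, a point of reference: the paper itself contains no proof of Lemma \ref{lem1} --- it is imported verbatim from \cite{GS}, so your attempt is being compared against a citation rather than an in-paper argument. Your skeleton is the right shape and is, in outline, how a uniform-in-$\nu$ statement of this kind is proved: reduce to bounded $I$, pass to $T_\nu T_\nu^*$ on $L^2(0,1)$, aim for the Schur-admissible kernel bound $|K(r,r')|\lesssim |r-r'|^{-1/2}+(r+r')^{-1/2}$, expand $\tilde J_\nu$ in the oscillatory range and invoke the Carleson-type estimate $\big|\int e^{i(a\rho^2+b\rho)}A(\rho)\rho^{-1/2}\,\mathrm{d}\rho\big|\lesssim |b|^{-1/2}$ uniformly in $a$.

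The genuine gap is exactly at the step you yourself call the crux, and the argument you do give there is not sufficient. Consider $r\ll r'$ and the $\rho$-window where $|r\rho-\nu|\le\nu^{1/3}$ while $r'\rho\ge 2\nu$ lies in the oscillatory range: on that window $|J_\nu(r\rho)|\lesssim\nu^{-1/3}$ and $|J_\nu(r'\rho)|$ has essentially constant modulus $\sim(r'\rho)^{-1/2}$, so both the pointwise bound and your Cauchy--Schwarz argument with Lemma \ref{Bessel2} give a contribution of size about $r^{-1/2}$ for this window (Cauchy--Schwarz gains nothing when one factor has constant modulus), and $r^{-1/2}$ is \emph{not} dominated by $|r-r'|^{-1/2}+(r+r')^{-1/2}\sim (r')^{-1/2}$ in this regime. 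Hence in the mixed regime (one argument near the turning point, the other oscillatory) one is forced to exploit oscillation --- of $e^{\pm ir'\rho}$ played against an Airy-type factor that is slowly varying on scale $\nu^{1/3}$, in the presence of the quadratic phase with the arbitrary coefficient $t(r)-t(r')$ --- and this requires uniform turning-point asymptotics with symbol-type derivative bounds that are nowhere in Lemma \ref{Bessel} and are precisely the technical content of \cite{GS} that you invoke rather than prove. The same remark applies to several inputs treated as routine: the derivative bound $|\partial_s a_\pm(s,\nu)|\lesssim s^{-1}$ uniform in $\nu$ is assumed, not available from \eqref{2.8}; the $|b|^{-1/2}$ estimate with an amplitude satisfying only $|A|\le1$, $|A'|\le\rho^{-1}$ needs a dyadic/van der Corput argument, since the total variation of $A$ over a long interval is logarithmically large; and the ``honestly lower-order'' cross terms involving the error $E(r,k)$ of \eqref{2.8} are not absolutely convergent over long intervals (they produce a logarithmic divergence), so they too need oscillation. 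In short, the proposal is a sensible roadmap that correctly locates the difficulty, but the decisive uniform-in-$\nu$ analysis is left unproved, so as a standalone proof it does not close.
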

We also can follow the Carleson approach \cite{Car} to linearize our
maximal operator, by making t into a function of $r$, $t(r)$. By the
triangle inequality, we estimate
\begin{equation*}
\begin{split} \|u^*(x)\|_{L^2(B^n)}\leq C\sum_{k=0}^{\infty}\sum_{\ell=1}^{d(k)}\Big\|\int_0^\infty(r\rho)^{-\frac{n-2}2}J_{\nu(k)}(r\rho)e^{
it(r)\rho^2}b^0_{k,\ell}(\rho)\rho^{n-1}\mathrm{d}\rho\Big\|_{L^2_{r^{n-1}\mathrm{d}r}}.
\end{split}
\end{equation*}
Let $g(\rho)=b^0_{k,\ell}(\rho)\rho^{\frac{n-1}2+\frac14}$, then
\begin{equation}\label{4.9}
\begin{split} \|u^*(x)\|_{L^2(B^n)}\lesssim\sum_{k=0}^{\infty}\sum_{\ell=1}^{d(k)}\Big\|\int_0^\infty \tilde{J}_{\nu(k)}(r\rho)e^{
it(r)\rho^2}\rho^{-\frac14}g(\rho)\mathrm{d}\rho\Big\|_{L^2_{r}([0,1])}.
\end{split}
\end{equation}
Using Lemma \ref{lem1}, we obtain
\begin{equation}\label{4.9}
\begin{split} \|u^*(x)\|_{L^2(B^n)}\lesssim
C\sum_{k=0}^{\infty}\sum_{\ell=1}^{d(k)}\Big\|b^0_{k,\ell}(\rho)\rho^{\frac{n-1}2+\frac14}\Big\|_{L^2_{\rho}(\R^+)}.
\end{split}
\end{equation}
Let $\alpha=(n-1)/2+\epsilon$ with $\epsilon>0$, we have by the
Cauchy-Schwarz inequality
\begin{equation*}
\begin{split} \|u^*(x)\|_{L^2(B^n)}\leq
C\Big(\sum_{k=0}^{\infty}\sum_{\ell=1}^{d(k)}(1+k)^{-2\alpha}\Big)^{\frac12}\Big(\sum_{k=0}^{\infty}\sum_{\ell=1}^{d(k)}
(1+k)^{2\alpha}\Big\|b^0_{k,\ell}(\rho)\rho^{\frac{n-1}2+\frac14}\Big\|^2_{L^2_{\rho}(\R^+)}\Big)^{\frac12}.
\end{split}
\end{equation*}
Since $d(k)\simeq \langle k\rangle^{n-2}$, we have by Lemma
\ref{sobolev}
\begin{equation*}
\begin{split} \|u^*(x)\|_{L^2(B^n)}&\lesssim
\Big(\sum_{k=0}^{\infty}\sum_{\ell=1}^{d(k)}(1+k)^{2\alpha}\Big\|b^0_{k,\ell}(\rho)\rho^{\frac{n-1}2+\frac14}\Big\|^2_{L^2_{\rho}(\R^+)}\Big)^{\frac12}
\lesssim \|u_0\|_{H^{\frac14}_rH^{\alpha}_\theta}.
\end{split}
\end{equation*}
This completes the proof of Theorem \ref{thm3}.

\begin{center}

\end{center}
\end{document}